\documentclass[11pt,leqno]{amsart}

\usepackage{graphicx}
\usepackage{amsfonts,delarray,amssymb,amsmath,amsthm,a4,a4wide}
\usepackage{latexsym}
\usepackage{epsfig}
\usepackage{color}
\usepackage[margin=1in]{geometry}


\newcommand{\trace}{\text{trace}}

\newcommand{\R}{{\mathbb R}} 

\newcommand{\e}{\varepsilon}

\newcommand{\p}{\partial}

\newenvironment{myindentpar}[1]%
{\begin{list}{}%
         {\setlength{\leftmargin}{#1}}%
         \item[]%
}
{\end{list}}

\theoremstyle{plain}
\newtheorem{thm}{Theorem}[section]

\newtheorem{lem}[thm]{Lemma}
\newtheorem{conj}[thm]{Conjecture}

\newtheorem{prop}[thm]{Proposition}
\newtheorem{rem}[thm]{Remark}

\theoremstyle{definition}
\newtheorem{defn}[thm]{Definition}

\title[ Polynomial decay in $W^{2,\varepsilon}$ estimates ]{Polynomial decay in $W^{2,\varepsilon}$ estimates for viscosity supersolutions of fully nonlinear elliptic equations}
\author{Nam Q. Le}
\address{Department of Mathematics, Indiana University,
Bloomington, 831 E 3rd St, IN 47405, USA.}
\email{nqle@indiana.edu}
\thanks{The research of the author was supported in part by the National Science Foundation under grant DMS-1764248.}
\subjclass[2010]{35J60, 35B65, 35D40, 35D35}
\keywords{Fully nonlinear elliptic equations, $W^{2,\e}$ estimates, viscosity solutions, strong solutions, sliding paraboloids, measure estimate}

\setcounter{tocdepth}{2}
\makeatletter
\def\l@subsection{\@tocline{2}{0pt}{2.5pc}{5pc}{}}
\makeatother

 \begin{document}

\begin{abstract}
We prove $W^{2,\varepsilon}$ estimates for viscosity supersolutions of fully nonlinear, uniformly elliptic equations where $\varepsilon$ decays polynomially with respect to the ellipticity ratio of the equations.
Our result is related to a conjecture of Armstrong-Silvestre-Smart [{Comm. Pure Appl. Math.} {\bf 65} (2012), no. 8, 1169--1184] which predicts a linear decay for $\varepsilon$ with respect to the ellipticity ratio of the equations.
\end{abstract}
 \maketitle

\section{Introduction and statement of the main result}

In this paper, we prove $W^{2,\e}$ estimates for viscosity supersolutions of fully nonlinear, uniformly elliptic equations where $\e$ decays polynomially with respect to the ellipticity ratio of the equations.

Let us recall some history and motivation for these estimates. $W^{2,\e}$ estimates for strong solutions of linear, uniformly elliptic equations in nondivergence form with only 
measurable coefficients were first obtained by Lin~\cite{Lin}. The positive exponent $\e$ is small and depends only on the dimension and the ellipticity of the equations.
Around the same time, Evans \cite{E} discovered similar $W^{2,\e}$ estimates for fully nonlinear, uniformly elliptic equations of the form $F(D^2 u) = 0.$
$W^{2,\e}$ estimates were later extended to viscosity solutions of fully nonlinear, uniformly elliptic equations in Caffarelli and Cabr\'e \cite{CC}. In \cite{GT}, Guti\'errez and Tournier obtained
$W^{2,\e}$ estimates for the linearized Monge-Amp\`ere equation which is in general degenerate and singular. Recently, 
Lin's approach has been extended by Yu \cite{Yu} to establish $W^{\sigma,\e}$ estimates for a class of nonlocal fully nonlinear elliptic equations.

Combining the $W^{2,\e}$ estimates in \cite{CC, Lin} with a deep result of Savin \cite{Sa} on the $C^{2,\alpha}$ regularity of viscosity solutions of fully nonlinear, uniformly elliptic equations which are close to quadratic polynomials, Armstrong, Silvestre and Smart \cite{ASS} proved a partial regularity result for viscosity solutions of general fully nonlinear, uniformly elliptic equations together with an estimate on the Hausdorff dimension of the singular set. The important point in \cite{ASS} is that no convexity nor concavity  is assumed of the equations. More precisely, they proved that a viscosity solution of a uniformly elliptic, fully nonlinear equation $F(D^2 u)=0$ in a domain $\Omega\subset\R^n$ is $C^{2,\alpha}$ on the compliment of a closed set $\Sigma\subset\overline{\Omega}$ of Hausdorff dimension at most $n-\e$. The function $F$ is assumed to be $C^1$ and uniformly elliptic with ellipticity constants $\lambda$ and $\Lambda$, and the constant $\e> 0$ is exactly the exponent in $W^{2,\e}$ estimates for viscosity supersolutions of  fully nonlinear elliptic equations with ellipticity constants $\lambda$ and $\Lambda$; see Proposition \ref{w2ep} for a precise statement. As remarked in \cite[Remark 5.4]{ASS},
the dimension of the singular set in the partial regularity result in \cite{ASS} could be further reduced if we could improve the exponent $\e$ of the $W^{2, \e}$ estimates. In this paper, we offer one such improvement
 from the known lower bound for $\e$ which decays exponentially with respect to the ellipticity ratio of the equations to a new lower bound which decays polynomially. For further discussion, we introduce some standard notation.

Throughout, let $n\geq 2$ be a positive integer. Let $\mathcal{S}_n$ denote the set of real $n\times n$ symmetric matrices. Let $I_n= (\delta_{ij})_{1\leq i, j\leq n}\in\mathcal{S}_n$ be the identity matrix.
Recall that the Pucci extremal operators (see, for example, \cite[Chapter 2]{CC} and \cite[Chapter 5]{HL}) are defined for constants $0 < \lambda \leq \Lambda$ and $
M\in \mathcal{S}_n$ by
\begin{equation*}
\mathcal{M}^{+}_{\lambda,\Lambda}(M) : =  \sup_{\lambda I_n \leq A \leq \Lambda I_n} \trace (AM) ~ \mbox{and} \quad \mathcal{M}^{-}_{\lambda,\Lambda}(M) : =  \inf_{\lambda I_n \leq A \leq \Lambda I_n} \trace (AM).
\end{equation*}
We denote by $Q_{r}(x):=\{ y \in \R^n: |y_i - x_i| < \frac{r}{2} \}$ the open cube centered at $x$ and of side length $r$. 
Denote by $B_r(x): = \{ y\in \R^n: |y-x| < r\}$ the ball of radius $r$ centered at $x$.
For simplicity, we set $Q_{r} := Q_{r}(0)$
and $B_r: = B_r(0)$. We denote the n-dimensional Lebesgue measure of a measurable set $E\subset\R^n$ by $|E|$.\\
\noindent
Given a domain $\Omega \subseteq \R^n$ and a function $u\in C(\Omega)$, define the quantities
\begin{multline*}
 \underline \Theta(u,\Omega)(x): = \inf\big{\{} A \geq 0 : \ \mbox{there exists} \ p \in \R^n \ \mbox{such that for all} \ y \in \Omega, \\ 
u(y) \geq u(x) + p\cdot (y-x) - \tfrac12 A|x-y|^2 \big{\}}.
\end{multline*}
The quantity $\underline \Theta(u,\Omega)(x)$ is the minimum curvature of any paraboloid that touches $u$ from below at $x$. If $u$ cannot be touched from below at $x$ by any paraboloid, then $\underline \Theta(u,\Omega)(x) = +\infty$. 

\medskip

Armstrong-Silvestre-Smart proved the following
$W^{2,\e}$ estimates for viscosity supersolutions; see \cite[Proposition 3.1]{ASS}. 
\begin{prop}[\cite{ASS}]
 \label{w2ep} Let $\lambda\leq\Lambda$ be positive constants.
If $u \in C(B_1)$ satisfies the inequality
$
\mathcal{M}^{-}_{\lambda,\Lambda}(D^2u) \leq 0 $ in $ B_1\subset\R^n,$
then 
\begin{equation} \label{w2epest}
\left| \left\{ x\in B_{1/2} : \underline\Theta(u,B_1)(x) > t\right\} \right| \leq Ct^{-\e}
\end{equation}
for all $t>t_0 \sup_{B_1} |u|$,
where the constants $C,t_0, \e > 0$ depend only on $n,\lambda$ and $\Lambda$.
\end{prop}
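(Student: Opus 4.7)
The strategy is the classical one of Caffarelli--Cabr\'e \cite{CC}: normalize, prove a pointwise measure estimate via sliding paraboloids plus ABP, and then run a Calder\'on--Zygmund dyadic iteration to upgrade the measure estimate into a polynomial tail bound. First, by rescaling $u \mapsto u/\sup_{B_1}|u|$ and using the homogeneity of $\mathcal{M}^{-}_{\lambda,\Lambda}$, I would reduce to the case $\sup_{B_1}|u|\leq 1$ and aim to prove \eqref{w2epest} for all $t \geq t_0$.

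The heart of the argument is a measure estimate: there should exist universal constants $M_0>1$ and $\mu\in(0,1)$ (depending only on $n,\lambda,\Lambda$) such that for every cube $Q_r \subset B_{1/2}$,
\[
\left|\{x\in Q_r : \underline\Theta(u,B_1)(x) \leq M_0 r^{-2}\}\right| \geq \mu \,|Q_r|.
\]
I would prove this by the sliding paraboloid technique of Savin: for each $y$ in a suitable enlarged cube, slide the concave paraboloid $P_y^t(x) = t - \tfrac{M_0}{2r^2}|x-y|^2$ upward from $t=-\infty$ until it first touches the graph of $u$ from below; the contact point is then automatically a point where $\underline\Theta(u,B_1) \leq M_0/r^2$. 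The ABP maximum principle for viscosity supersolutions of $\mathcal{M}^{-}_{\lambda,\Lambda}(D^2 v)\leq 0$ bounds the Lebesgue measure of the resulting contact set from below in terms of the oscillation of $u$ on $B_1$.

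To conclude, I would perform a Calder\'on--Zygmund dyadic iteration. Setting $A_t := \{x\in Q_{1/2}: \underline\Theta(u,B_1)(x) > t\}$, the measure estimate at scale $1/2$ gives $|A_{M_0}| \leq (1-\mu)|Q_{1/2}|$. On each bad cube of generation $k$ (side $\sim 2^{-k}$), the rescaled measure estimate forces a further $\mu$-fraction of points to drop out at level $M_0\cdot 4^k$, producing the recursive bound $|A_{M_0 4^k}| \leq (1-\mu)^k |Q_{1/2}|$. Interpolating in $t$ then yields \eqref{w2epest} with $\varepsilon = -\log(1-\mu)/\log(4M_0)$.

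The main obstacle is the measure estimate step: since $u$ is only a viscosity supersolution, the sliding paraboloid argument cannot appeal to pointwise second derivatives, and one must invoke the ABP maximum principle in its viscosity formulation together with a careful Lipschitz/BV analysis of the concave envelope formed by the family of touching paraboloids. One also needs to track how the constants $M_0$ and $\mu$ depend on $\lambda,\Lambda$, since this dependence determines the exponent $\varepsilon$. Once the measure estimate is established, the dyadic iteration is routine and the explicit expression for $\varepsilon$ follows automatically.
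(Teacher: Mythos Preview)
Your overall architecture---normalize, prove a scale-invariant measure estimate, then run a Calder\'on--Zygmund iteration---is exactly the route taken both in \cite{CC,ASS} and in this paper (which proves the stronger Theorem~\ref{improve2}, hence in particular Proposition~\ref{w2ep}). But the measure estimate you write down is not the one that drives the iteration. You claim that for \emph{every} cube $Q_r\subset B_{1/2}$ one has $|\{x\in Q_r:\underline\Theta(u,B_1)(x)\le M_0 r^{-2}\}|\ge\mu|Q_r|$. Even if this were true, applying it on the dyadic cubes of generation $k$ only yields $|A_{M_0 4^k}\cap Q|\le(1-\mu)|Q|$ for each such $Q$, and summing gives $|A_{M_0 4^k}|\le(1-\mu)|Q_{1/2}|$---a bound that does not improve with $k$. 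The correct measure estimate (the paper's Lemma~\ref{touch3_lem}, or \cite[Lemma 4.5]{CC}) carries an extra \emph{touching hypothesis}: if the enlarged cube $Q_{3r}(x_0)$ meets $G^-_t(u,\Omega)$, then $|G^-_{Mt}(u,\Omega)\cap Q_r(x_0)|\ge(1-\sigma)|Q_r(x_0)|$. Its contrapositive is precisely the hypothesis of the Calder\'on--Zygmund lemma (Proposition~\ref{CZD}) and is what yields the geometric decay $|A_{M^{k+1}}|\le\sigma|A_{M^k}|$. The touching hypothesis is not a technicality: it is what lets you rescale the cube to unit size while replacing the global bound $\sup|u|\le1$ (which does not survive rescaling) by the existence of a single touching paraboloid of opening $t$ in the parent cube.

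A second point of comparison: the paper deliberately \emph{avoids} the ABP maximum principle that you invoke. To get the measure lower bound on the contact set, the paper uses the area formula for the map $\Phi(x)=x+\frac{1}{K}Dv(x)$ from contact points to vertices (Lemma~\ref{meas_lem}), bounding $\det D\Phi$ pointwise via the equation. This is the whole novelty here: ABP applied to the barrier of \cite[Lemma~4.1]{CC} gives $1-\sigma\approx M^{-1}$ with $M\approx e^{(n-1)\Lambda/\lambda}$, so $\varepsilon\approx(1-\sigma)/\log M$ decays exponentially in $\Lambda/\lambda$, whereas the area-formula route decouples $\sigma$ from $M$ and yields the polynomial decay in Theorem~\ref{improve2}. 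Your ABP-based argument would recover Proposition~\ref{w2ep} with the classical (exponential) constants once the measure estimate is stated correctly, but it would not give the paper's main improvement. You should also build in a separate \emph{localization} step (the paper's Lemma~\ref{construct_lem}): the touching hypothesis puts a single paraboloid under $u$ somewhere in $Q_3$, and a barrier construction is needed to transport that information into the small cube $Q_{1/(4\sqrt n)}$ before the sliding argument can start.
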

We refer the reader to \cite{CC} for more on viscosity solutions. A similar result to Proposition \ref{w2epest} was obtained in \cite[Lemma 7.8]{CC} and \cite[Lemma 5.15]{HL}.
Obviously \eqref{w2epest} implies that for any $0< \hat \e < \e$,
\begin{equation*}
\int_{B_{1/2}} \left( \underline\Theta(u,B_1)(x)\right)^{\hat\e}\, dx \leq C \sup_{B_1} |u|^{\hat\e},
\end{equation*}
where the constant $C$ depends additionally on a lower bound for $\e - \hat \e$. As emphasized by authors in \cite{ASS}, the precise form of the estimate in Proposition \ref{w2epest} 
which involves the quantity  $\underline\Theta(u,B_1)(x)$ is crucial in their proof of the partial regularity result
for viscosity solutions of general fully nonlinear, uniformly elliptic equations. In fact, the weaker statement that $u$ is merely twice differentiable at almost every point with $|D^2u|\in L^{\e}$ is insufficient to prove their partial regularity result. 

By constructing an explicit example \cite[Remark 3.3]{ASS}, Armstrong-Silvestre-Smart showed that the exponent $\e$ in Proposition \ref{w2epest} cannot be larger than $2(\Lambda/\lambda+1)^{-1}$. They made the following conjecture:
\begin{conj} \label{ASSconj}
The optimal exponent in Proposition~\ref{w2ep} is $\e = 2(\Lambda/\lambda+1)^{-1}$.
\end{conj}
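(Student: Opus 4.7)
The conjecture has two halves. The upper bound $\e \le 2(\Lambda/\lambda+1)^{-1}$ on any admissible exponent is already established by the Pucci-extremal example in \cite[Remark 3.3]{ASS}, which exhibits a radial profile (essentially a two-dimensional solution of $\mathcal{M}^{-}_{\lambda,\Lambda} = 0$ trivially extended to $\R^n$) whose distributional Hessian lies in $L^\e_{\loc}$ precisely when $\e < 2(\Lambda/\lambda+1)^{-1}$. So the task is to verify that the measure estimate \eqref{w2epest} continues to hold with $\e$ equal to this sharp value.

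My plan is to refine the sliding paraboloid / ABP scheme behind Proposition \ref{w2ep} so that the exponent extracted matches the decay rate of the extremal profile itself, rather than the rate produced by a generic smooth barrier. Set $m(t) := |\{x \in B_{1/2} : \underline\Theta(u,B_1)(x) > t\}|$. The standard argument fixes a large $\beta>1$, uses an ABP/covering step to prove a discrete decay $m(\beta t) \le (1-\mu) m(t)$, and iterates to $m(t) \lesssim t^{-\e}$ with $\e = \log(1/(1-\mu))/\log \beta$. The defect is that the $\mu$ produced by the standard covering is exponentially small in $\Lambda/\lambda$, which is precisely why the current exponent decays exponentially. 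I would instead try to replace this discrete dichotomy by an infinitesimal one: at each $t$, construct a family of paraboloids of curvature comparable to $t$ touching $u$ from below whose vertices cover a definite fraction of $\{\underline\Theta \le t\}^c$, and package this as a differential inequality of the form $t\, m'(t) \le -\e\, m(t)$, which integrates to the desired polynomial decay with the sharp $\e$.

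The crucial new ingredient I would attempt is to use a truncation of the Pucci-extremal profile itself as the barrier in the ABP step, in place of a smooth quadratic bump. The number $2(\Lambda/\lambda+1)^{-1}$ is precisely the homogeneity invariant associated with the extremal radial $\mathcal{M}^{-}_{\lambda,\Lambda}$-solution, so performing the covering with scale-invariant copies of this barrier should allow the homogeneity to be read off at each scale with no extra loss. Two ingredients are needed: (a) an ABP-type inequality that remains sharp when the comparison barrier has a cusp singularity rather than being $C^{1,1}$; and (b) a Vitali-type covering by sub-level sets of the extremal profile whose overlap constant is genuinely dimension-free and independent of $\Lambda/\lambda$.

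The main obstacle, and the reason the conjecture remains open, is precisely (b): existing covering arguments lose a factor polynomial in $\Lambda/\lambda$ each time one expands a ball by a bounded factor or smooths out a cusped barrier, and summed over dyadic scales this loss degrades the per-step decay rate $\mu$ from linear to exponential in $\Lambda/\lambda$. Matching the conjectured exponent thus seems to require either a genuinely scale-invariant covering in which the errors telescope, or an entirely different vantage point — for instance, a stochastic/control-theoretic representation of $\mathcal{M}^{-}_{\lambda,\Lambda}$ in which $2(\Lambda/\lambda+1)^{-1}$ appears directly as the tail exponent of an exit time for a suitably controlled diffusion. Given that even the polynomial-in-$\Lambda/\lambda$ improvement which the present paper establishes requires substantial refinements of the classical argument, I expect the full conjecture to demand a genuinely new idea at the ABP/covering step rather than a further sharpening of the sliding-paraboloid method.
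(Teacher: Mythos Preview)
The statement you were asked to prove is a \emph{conjecture}, and the paper does not prove it. Immediately after stating Conjecture~\ref{ASSconj}, the paper says explicitly: ``Except for the case $\lambda=\Lambda$ for which Conjecture~\ref{ASSconj} is known to be true, it is widely open for the case $\lambda<\Lambda$ \ldots\ Although we are unable to prove Conjecture~\ref{ASSconj}, we prove that $\e$ decays at most polynomially with respect to $\Lambda/\lambda$.'' So there is no proof in the paper to compare against; the paper's actual contribution is the weaker Theorem~\ref{improve2}, which gives $\e > (\Lambda/\lambda)^{-(n+1)} c(n)$ via a sliding-paraboloid measure estimate (Lemma~\ref{meas_lem}) that bypasses the ABP step.

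Your proposal is, correspondingly, not a proof either, and you acknowledge this in your final paragraph. What you have written is a reasonable research outline: you correctly identify that the upper bound is already settled by \cite[Remark~3.3]{ASS}, that the loss in the classical argument occurs at the ABP/covering step, and that matching the conjectured exponent would require either a scale-invariant covering with no $\Lambda/\lambda$-dependent overlap loss or an entirely different (e.g.\ stochastic) mechanism. These are sensible observations, but neither the differential-inequality reformulation $t\,m'(t) \le -\e\, m(t)$ nor the idea of using the extremal Pucci profile as a barrier is carried to the point of producing an actual estimate; in particular, you do not explain how a cusped barrier could survive an ABP-type argument without smoothing (your item (a)), nor how the Vitali overlap constant could be made independent of $\Lambda/\lambda$ (your item (b)). Since the paper itself leaves the conjecture open and only achieves a polynomial lower bound on $\e$, there is no discrepancy to report --- but you should be clear that what you have submitted is a discussion of obstacles, not a proof.
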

Except for the case $\lambda=\Lambda$ for which Conjecture \ref{ASSconj} is known to be true, it is widely open for the case $\lambda<\Lambda$. In this paper, we will focus on this case, especially when
$\Lambda/\lambda$ is large.

Known estimates for $\e$ (see the discussion at the end of this Introduction) give that $\e$ decays exponentially with respect to $\Lambda/\lambda$. Although we are unable to prove Conjecture \ref{ASSconj}, we prove that $\e$ decays at most polynomially with respect to $\Lambda/\lambda$. 
Roughly speaking, our estimates imply that
$$\e>(\Lambda/\lambda)^{-(n+1)}c(n)$$
for some positive constant $c(n)$ depending only on the dimension $n$.
Before stating our theorem, we introduce the following definition.
\begin{defn}\label{GMs}
For  $v\in C(\Omega)$ and $K>0$, we define the sets
\begin{equation*}
G_K^-(v,\Omega)
 = \big\{\bar x\in \Omega:  \text{there is } p\in\R^n \text{ such that } v(x)\geq v(\bar x) +
p\cdot (x-\bar x) -
\frac{K}{2}|x-\bar x|^2\, \forall x\in \Omega\big\}
\end{equation*}
and
\begin{equation*}A^{-}_K(v,\Omega)=\Omega\setminus G^{-}_K(v,\Omega).
\end{equation*}
\end{defn}
We observe from the definitions that
$$ \left\{ x\in B_{1/2} : \underline\Theta(u,B_1)(x) > t\right\} \subset A_t^{-}(u,B_1)\cap B_{1/2}.$$
Our main theorem states:
\begin{thm}
\label{improve2} Let $\lambda\leq\Lambda$ be positive constants.
 If $u \in C(B_1)$ satisfies the inequality
$
\mathcal{M}^{-}_{\lambda,\Lambda}(D^2u) \leq 0 $ in $ B_1\subset\R^n,$
then 
\begin{equation} 
\label{w2delta}
\left|  A_t^{-}(u,B_1)\cap B_{1/2} \right| \leq Ct^{-\e}
\end{equation}
for all $t>t_0 \sup_{B_1} |u|$,
where the constants $C,t_0, \e$ depend only on $n,\lambda$ and $\Lambda$ with 
\begin{equation}
\label{e_est}
\e>\left(\frac{\lambda}{\lambda + (n-1)\Lambda}\right)^n\left(
\frac{1}{4\sqrt{n}}\right)^n\frac{1}{\log  \left[10^5 n^3 (36n)^{\max\{1, \frac{(n-1)\Lambda}{\lambda}-1\}}\right]}.
\end{equation}
Thus, the exponent $\e$ in Proposition~\ref{w2ep} satisfies
$\e>(\Lambda/\lambda)^{-(n+1)}c(n).
$
\end{thm}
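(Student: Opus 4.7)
The plan is to follow the classical two-step template for $W^{2,\e}$ estimates of Caffarelli--Cabr\'e type: prove a single ``density'' estimate for the contact set of paraboloids that touch $u$ from below with a fixed opening $K_0$, and then iterate it through a stacked-cube Calder\'on--Zygmund decomposition to obtain the power decay \eqref{w2delta}. The abstract scheme is classical (see \cite[Chapter 7]{CC}); the new point is to execute the density estimate via \emph{sliding paraboloids} in a way that keeps every constant polynomial in $\Lambda/\lambda$, rather than exponential.

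The core lemma to establish is: there exist $K_0 = K_0(n,\lambda,\Lambda)$ and $\mu_0 = \mu_0(n,\lambda,\Lambda)$, with $\mu_0 \geq c_1(n)\bigl(\tfrac{\lambda}{\lambda + (n-1)\Lambda}\bigr)^n$ and $\log K_0 \leq c_2(n)(1 + \Lambda/\lambda)$, such that if $\mathcal{M}^{-}_{\lambda,\Lambda}(D^2u) \leq 0$ in $B_1$, $u \geq 0$, and $\inf_{B_{1/2}} u \leq 1$, then
\[
\bigl|G_{K_0}^{-}(u,B_1)\cap B_{1/2}\bigr| \;\geq\; \mu_0.
\]
I would prove this by sliding the family of concave paraboloids $P_y(x) = c_y - \tfrac{K}{2}|x-y|^2$, with vertex $y$ ranging over a small cube $Y$ near an approximate minimizer of $u$, and choosing $c_y$ maximal subject to $P_y \leq u$ on $B_1$. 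For each $y$ a contact point $x(y) \in G_K^{-}(u,B_1)$ exists; an elementary argument using $u \geq 0$ and $\inf_{B_{1/2}} u \leq 1$ shows that if $K$ is taken polynomially large in $\Lambda/\lambda$ and $Y$ polynomially small, then $x(y) \in B_{1/2}$ for every $y \in Y$. At a smooth contact point one has $y = x + K^{-1}\nabla u(x)$ with Jacobian $\det\bigl(I + K^{-1}D^2u(x)\bigr)$. The decisive estimate is an eigenvalue bound at the contact point: combining $D^2u(x) \geq -KI$ with $\mathcal{M}^{-}_{\lambda,\Lambda}(D^2u) \leq 0$ forces every eigenvalue of $D^2u(x)$ into $[-K,(n-1)(\Lambda/\lambda)K]$, because when one eigenvalue is nonnegative the remaining (at most $n-1$) negative ones contribute at least $-(n-1)K$ to the sum, and the Pucci inequality then bounds the positive ones. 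Consequently
\[
\det\bigl(I + K^{-1}D^2u(x)\bigr) \;\leq\; \bigl(1 + (n-1)\Lambda/\lambda\bigr)^{n},
\]
and the area formula applied to $y \mapsto x(y)$ yields precisely the polynomial factor $\bigl(\lambda/(\lambda+(n-1)\Lambda)\bigr)^n$ appearing in $\mu_0$.

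Granted this density estimate, the iteration is standard. A stacked-cube Calder\'on--Zygmund argument, with a scale factor $M = M(n,\lambda,\Lambda)$ arising when the measure estimate is rescaled to sub-cubes, produces
\[
\bigl|A_{(K_0 M)^{k}}^{-}(u,B_1)\cap B_{1/2}\bigr| \;\leq\; (1-\mu_0)^{k} \qquad \text{for all } k \geq 1,
\]
which on interpolating to general $t$ delivers \eqref{w2delta} with
\[
\e \;\geq\; \frac{-\log(1-\mu_0)}{\log(K_0 M)} \;\geq\; \frac{\mu_0}{\log(K_0 M)}.
\]
Substituting the bounds above---$\mu_0$ of order $(\lambda/\Lambda)^n$ up to dimensional factors, $\log(K_0 M)$ of order $(\Lambda/\lambda)\log n$---yields the explicit inequality \eqref{e_est} and hence $\e \geq (\Lambda/\lambda)^{-(n+1)}c(n)$.

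The hard part is the quantitative bookkeeping in the density estimate. Previous approaches (in the spirit of \cite{CC, Lin}) route the measure estimate through an explicit barrier built by iterating a weak Harnack / ABP inequality, and that iteration alone forces an exponential dependence on $\Lambda/\lambda$ either in the opening $K_0$ or in the measure gain $\mu_0$. The sliding paraboloid method bypasses any explicit barrier: the density estimate is read off directly from the Jacobian bound above. Securing the polynomial exponent $n+1$ then hinges on the sharp eigenvalue inequality---which crucially uses that only $n-1$ eigenvalues can be negative when a prescribed other one is positive---together with a careful choice of the sizes of $Y$ and $K$ so that the Calder\'on--Zygmund rescaling factor $M$ likewise stays polynomial in $\Lambda/\lambda$; this is where the precise numerical constants visible in \eqref{e_est} are fixed.
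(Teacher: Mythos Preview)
Your core insight---using sliding paraboloids together with the eigenvalue bound $-KI\le D^2u\le (n-1)(\Lambda/\lambda)K\,I$ at contact points, so that the Jacobian $\det(I+K^{-1}D^2u)$ is at most $(1+(n-1)\Lambda/\lambda)^n$ and the measure gain $\mu_0$ is only \emph{polynomially} small in $\lambda/\Lambda$---is exactly the mechanism of the paper's Lemma~\ref{meas_lem}. That part of your plan is correct and is the heart of the result. The gap is in how your density lemma feeds into the Calder\'on--Zygmund iteration, and in the accompanying claim that the method ``bypasses any explicit barrier''.

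Your core lemma assumes $u\ge 0$ and $\inf_{B_{1/2}}u\le 1$, and you place the vertex cube $Y$ near an approximate minimizer in $B_{1/2}$. But in the iteration step, after rescaling a sub-cube $Q_r(x_0)$ with $Q_{3r}(x_0)\subset Q_1$, all you inherit is $G_t^-(u,\Omega)\cap Q_{3r}(x_0)\ne\emptyset$: the normalized function $\tilde u$ satisfies $\tilde u\ge-\tfrac12|\cdot-y^*|^2$ with equality at some $y^*\in Q_3$, not $\inf_{Q_1}\tilde u\le 1$. If you center $Y$ at $y^*$, the contact set lies near $y^*\in Q_3\setminus Q_1$ and says nothing about $|G^-_{K_0}\cap Q_1|$, so Proposition~\ref{CZD} does not engage. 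If instead you center $Y$ at the origin, a short computation using only $\tilde u\ge-\tfrac12|\cdot-y^*|^2$ shows the contact points can lie anywhere within distance of order $\sqrt n$ of the origin, \emph{regardless of how large $K$ is}; they are not forced into $Q_1$. No ``elementary'' choice of $K$ polynomially large fixes this.

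This is precisely why the paper does \emph{not} dispense with barriers. It separates the step into a localization lemma (Lemma~\ref{construct_lem}), which uses the standard radial subsolution $|x|^{-2m}$ to pass from ``touched somewhere in $Q_3$'' to ``touched in $Q_{1/(4\sqrt n)}$'' at the price of enlarging the opening by a factor $M_2\asymp (36n)^{(n-1)\Lambda/\lambda}$, and only then applies the sliding-paraboloid measure estimate with a \emph{fixed} opening $K=32$. The barrier is unavoidable for localization; the key observation is that its contribution enters $\e$ only through $\log M$, which is linear in $\Lambda/\lambda$. So your statements that ``$K$ is taken polynomially large in $\Lambda/\lambda$'' and that ``$M$ likewise stays polynomial'' are wrong (they are exponential; only their logarithms are linear), though your earlier bound $\log K_0\le c_2(n)(1+\Lambda/\lambda)$ is consistent with the truth. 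What the sliding-paraboloid argument actually buys is that $1-\sigma=\mu_0$ is polynomial in $\lambda/\Lambda$ \emph{independently of $M$}; that decoupling is the improvement over the ABP route, where $1-\sigma\sim 1/M$ forces exponential decay of $\e$.
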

From Theorem \ref{improve2} together with \cite[Lemma 5.2]{ASS} and \cite[Proposition 1.2]{CS}, we can conclude that the exponent $\e$ in the $W^{3,\e}$ estimates for viscosity solutions of general fully nonlinear, uniformly elliptic equations
decays polynomially with respect to the ellipticity ratio of the equations.

For strong supersolutions of linear, uniformly elliptic equations in nondivergence form, we lower the power $(n+1)$ in Theorem \ref{improve2} to $n$ as in the following theorem.
\begin{thm}
\label{improve3}
Let $\lambda\leq\Lambda$ be positive constants.
Assume $(a^{ij}(x))\in \mathcal{S}_n$ satisfies 
$\lambda I_n\leq (a^{ij}(x))\leq \Lambda I_n$ a.e. in $B_1\subset\R^n$.
If $u \in W^{2,n}(B_1)$ satisfies the inequality
$ a^{ij} u_{ij} \leq 0 $ in $ B_1,$
then 
\begin{equation*} 
\left|  A_t^{-}(u,B_1)\cap B_{1/2} \right| \leq Ct^{-\e}
\end{equation*}
for all $t>t_0 \sup_{B_1} |u|$,
where the constants $C,t_0, \e$ depend only on $n,\lambda$ and $\Lambda$ with $$\e>\left(\frac{\lambda}{\Lambda}\right)^{n-1}\left(
\frac{1}{4\sqrt{n}}\right)^n\frac{1}{\log  \left[10^5 n^3 (36n)^{\max\{1, \frac{(n-1)\Lambda}{\lambda}-1\}}\right]} >(\Lambda/\lambda)^{-n}c(n).$$
\end{thm}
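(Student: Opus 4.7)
\noindent\emph{Proof proposal.}
The plan is to run the same iterative scheme that proves Theorem~\ref{improve2} and to insert one sharper Hessian estimate at contact points, which the linear hypothesis $a^{ij}u_{ij}\leq 0$ makes available but which $\mathcal{M}^{-}_{\lambda,\Lambda}(D^2u)\leq 0$ alone does not. First I would reproduce, verbatim, the reduction from the polynomial decay of $|A_t^{-}(u,B_1)\cap B_{1/2}|$ to a one-step measure estimate of the form
\[
\bigl|A_{MK}^{-}(u,B_1)\cap Q\bigr|\;\leq\;(1-c_0)\,\bigl|A_K^{-}(u,B_1)\cap Q\bigr|
\]
on a suitable dyadic sub-cube $Q$, for constants $M>1$ and $c_0\in(0,1)$; iterating in the standard way yields $\e=\log(1/(1-c_0))/\log M$. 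The covering, localization, stopping-time and sliding-paraboloid pieces of that reduction depend only on $n$ and on geometric parameters and transfer to the linear setting unchanged.

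The only step at which the linear hypothesis strictly improves on the Pucci case is the bound on $\det(D^2u(x_0)+KI_n)$ at a point $x_0$ where $u$ is touched from below by a paraboloid of opening $K$. At such a point $D^2u(x_0)\succeq -KI_n$, so $D^2u(x_0)+KI_n$ is positive semidefinite. Pick an orthonormal basis diagonalizing $A:=(a^{ij}(x_0))=\diag(\alpha_1,\dots,\alpha_n)$ with $\alpha_j\in[\lambda,\Lambda]$ and set $b_j:=(D^2u(x_0))_{jj}$ in this basis. The equation reads $\sum_j\alpha_j b_j\leq 0$, while $b_j\geq -K$; hence for each $j$,
\[
\alpha_j b_j\;\leq\;-\sum_{i\neq j}\alpha_i b_i\;\leq\;K\sum_{i\neq j}\alpha_i,\qquad\text{so}\qquad 0\;\leq\;b_j+K\;\leq\;K\,\frac{\trace A}{\alpha_j}.
\]
Hadamard's inequality applied to the positive semidefinite matrix $D^2u(x_0)+KI_n$ in this basis then yields
\[
\det\bigl(D^2u(x_0)+KI_n\bigr)\;\leq\;\prod_{j=1}^{n}(b_j+K)\;\leq\;K^n\,\frac{(\trace A)^n}{\det A}\;\leq\;n^n K^n\,\Bigl(\frac{\Lambda}{\lambda}\Bigr)^{\!n-1}.
\]
The last inequality follows from maximizing $(\trace A)^n/\det A$ on $[\lambda,\Lambda]^n$: the interior critical point is a minimum, so the maximum is attained at a vertex $\alpha_j\in\{\lambda,\Lambda\}$, where with $k$ entries equal to $\lambda$ and $n-k$ equal to $\Lambda$ one has $(\trace A)^n/\det A\leq (n\Lambda)^n/(\lambda^k\Lambda^{n-k})=n^n(\Lambda/\lambda)^k\leq n^n(\Lambda/\lambda)^{n-1}$. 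The analogous Hessian bound available in the Pucci setting of Theorem~\ref{improve2} is only of order $K^n(\Lambda/\lambda)^n$, so the linear hypothesis saves a full factor of $\Lambda/\lambda$ at this step.

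Plugging this sharper determinant bound into the ABP/change-of-variables step that controls the measure of the bad set then gives $c_0\gtrsim(\lambda/\Lambda)^{n-1}n^{-n}$, while the geometric parameter $M$, hence $\log M$, coming from the iteration is the same as in Theorem~\ref{improve2}; this is why the logarithmic denominator in the two theorems coincides. Together these produce the announced lower bound on $\e$, and in particular $\e>(\Lambda/\lambda)^{-n}c(n)$. The principal obstacle is bookkeeping: one must verify that no other constant in the covering and iteration steps of Theorem~\ref{improve2} absorbs the factor $\Lambda/\lambda$ saved in the Hessian bound, so that the improvement propagates intact to the final exponent.
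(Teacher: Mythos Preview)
Your strategy matches the paper's exactly: the proof of Theorem~\ref{improve3} in the paper simply reruns the machinery of Theorem~\ref{improve2}, replacing the measure estimate of Lemma~\ref{meas_lem} by the sharper Lemma~\ref{st_meas_lem}, and the only difference between those two lemmas is precisely the bound on $\det\bigl(I_n+\frac{1}{K}D^2v\bigr)$ at contact points. So your identification of where the linear hypothesis enters is correct, and the localization constant $M$ (hence the logarithmic denominator) is indeed unchanged.

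The one substantive discrepancy is in \emph{how} you bound that determinant. The paper argues directly from $a^{ij}(\delta_{ij}+\frac{1}{K}v_{ij})\leq\trace(a^{ij})$ and the matrix AM--GM inequality $\trace(AB)\geq n(\det A)^{1/n}(\det B)^{1/n}$ for $A,B\succeq 0$, obtaining
\[
\det\Bigl(I_n+\tfrac{1}{K}D^2v\Bigr)\;\leq\;\Bigl(\frac{\trace A}{n(\det A)^{1/n}}\Bigr)^n\;\leq\;\Bigl(\frac{\Lambda}{\lambda}\Bigr)^{n-1}.
\]
Your Hadamard-inequality route gives only $\det(I_n+\frac{1}{K}D^2v)\leq (\trace A)^n/\det A\leq n^n(\Lambda/\lambda)^{n-1}$, which is weaker by the factor $n^n$. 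Consequently your argument yields $c_0\gtrsim(\lambda/\Lambda)^{n-1}(4n^{3/2})^{-n}$ rather than $(\lambda/\Lambda)^{n-1}(4\sqrt{n})^{-n}$, and thus proves the final inequality $\e>(\Lambda/\lambda)^{-n}c(n)$ but \emph{not} the sharper explicit lower bound displayed in the theorem. If you want the stated constant, swap your Hadamard step for the trace--determinant inequality above.

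A minor point: in your vertex argument you write $(\trace A)^n/\det A\leq (n\Lambda)^n/(\lambda^k\Lambda^{n-k})=n^n(\Lambda/\lambda)^k\leq n^n(\Lambda/\lambda)^{n-1}$, but the last step fails at $k=n$. The conclusion is still true (at $k=n$ the actual value is $n^n$, not your overestimate $n^n(\Lambda/\lambda)^n$), so this is easily patched, but the line as written does not establish it.
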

We indicate how to prove the $W^{2,\e}$ estimates together with numerical improvement on $\e$. The heart of $W^{2,\e}$ estimates is the following measure and localization estimate.
\begin{lem}
\label{touchCC}
Assume that $\overline{B}_{2\sqrt{n}}\subset\Omega\subset\R^n.$
Suppose that $v\in C(\Omega)$ satisfies
 $\mathcal{M}^{-}_{\lambda,\Lambda} (D^2 v)\leq 0$ in $\Omega$.
If $G_1^{-}(v,\Omega)\cap Q_3\neq \emptyset$ then there is $M(n,\lambda,\Lambda)>1$ and $\sigma(n,\lambda,\Lambda)\in (0,1)$ such that
$$|G_{M}^{-}(v,\Omega)\cap Q_1|\geq 1- \sigma.$$
\end{lem}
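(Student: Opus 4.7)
The plan is to follow the barrier-plus-ABP scheme of Caffarelli--Cabr\'e \cite{CC}, with a final step that uses the paraboloid supplied by the hypothesis to upgrade ``paraboloid touching on $B_{2\sqrt{n}}$'' to ``paraboloid touching on the whole domain $\Omega$''. First I would normalize: pick $\bar x\in G_1^{-}(v,\Omega)\cap Q_3$ with slope $p$. Since $\mathcal{M}^{-}_{\lambda,\Lambda}(D^2\cdot)$ is invariant under additions of affine functions and the sets $G_K^{-}(v,\Omega)$ transform by corresponding slope shifts, I may assume $v(\bar x)=0$ and the global bound
\[
v(x)\geq -\tfrac12|x-\bar x|^2 \quad \text{for all } x\in\Omega,
\]
so that $v\geq -\tfrac{49n}{8}$ on $\overline B_{2\sqrt{n}}$.

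Next I would invoke the standard $C^{1,1}$ barrier $\varphi$ of \cite{CC}, rescaled so that $\varphi\geq \tfrac{49n}{8}$ on $\partial B_{2\sqrt{n}}$, $\varphi\leq -1$ on $Q_3$, $\|\varphi\|_{C^{1,1}}\leq K_0$, and $\mathcal{M}^{+}(D^2\varphi)\leq C_1\chi_{Q_1}$, with $K_0,C_1$ depending only on $n,\lambda,\Lambda$. Then $w:=v+\varphi$ satisfies $w\geq 0$ on $\partial B_{2\sqrt{n}}$, $w(\bar x)\leq -1$, and $\mathcal{M}^{-}(D^2 w)\leq C_1\chi_{Q_1}$ in the viscosity sense; the Alexandrov--Bakelman--Pucci estimate then gives
\[
1\leq \sup_{B_{2\sqrt{n}}}(-w)^+ \leq C_2\,|Q_1\cap\Sigma|^{1/n},
\]
where $\Sigma$ is the contact set of the concave envelope of $(-w)^+$, so that $|Q_1\cap\Sigma|\geq \mu_0:=C_2^{-n}>0$. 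At each $x\in Q_{1-\eta}\cap\Sigma$ (shrinking slightly to obtain a uniform bound $|\nabla \ell_x|\leq L_0$ on the affine supporting hyperplane $\ell_x$ to the concave envelope at $x$), the relation $w(y)\geq -\ell_x(y)$ on $B_{2\sqrt{n}}$ combined with a Taylor expansion of $\varphi$ yields
\[
v(y)\geq a_x(y)-\tfrac{K_0}{2}|y-x|^2 \quad \text{for all } y\in B_{2\sqrt{n}},
\]
with $a_x$ affine, $a_x(x)=v(x)$, and $|\nabla a_x|\leq L_0+K_0$.

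Finally I would fix $M$ much larger than $K_0+L_0+n$ and verify that the steeper paraboloid $Q_x(y):=a_x(y)-\tfrac{M}{2}|y-x|^2$ lies below $v$ on all of $\Omega$, which then gives $x\in G_M^{-}(v,\Omega)$. On $B_{2\sqrt{n}}$ this is immediate since $M\geq K_0$; on $\Omega\setminus B_{2\sqrt{n}}$ the global lower bound on $v$ reduces the task to verifying $F(y):=Q_x(y)+\tfrac12|y-\bar x|^2\leq 0$, where $F$ is a concave quadratic with Hessian $(1-M)I$. For $M$ large, $F$ has its unique interior maximum inside $B_{2\sqrt{n}}$, so the maximum of $F$ on $\Omega\setminus B_{2\sqrt{n}}$ is attained on $\partial B_{2\sqrt{n}}$, where a direct estimate using the bounds on $|\nabla a_x|$ and $|\bar x|$ keeps it nonpositive once $M$ is large enough (depending only on $n,\lambda,\Lambda$). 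Choosing $\eta$ small enough that $|Q_{1-\eta}\cap\Sigma|\geq\mu_0/2$ gives $|G_M^{-}(v,\Omega)\cap Q_1|\geq\mu_0/2$, i.e., the lemma with $\sigma:=1-\mu_0/2\in(0,1)$.

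The technical heart of the argument is this last step. Producing the uniform gradient bound $|\nabla \ell_x|\leq L_0$ is standard (interior Lipschitz regularity of the concave envelope, controlled by its sup norm and the distance to $\partial B_{2\sqrt{n}}$), and the extension outside $B_{2\sqrt{n}}$ is pure algebra once the constants are in hand; the main bookkeeping is tracking the precise dependence of $M$ on $K_0,L_0,n$, which is what ultimately feeds into the polynomial decay estimate for $\e$ in Theorem~\ref{improve2}.
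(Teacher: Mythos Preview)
Your outline is a correct proof of the lemma \emph{as stated} (existence of some $M,\sigma$), but it is not the paper's proof, and your last paragraph misidentifies what the argument buys.

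The paper deliberately avoids the barrier-plus-ABP scheme. It factors the lemma into a localization step (Lemma~\ref{construct_lem}: a radial barrier takes $G^-_{1/(8n)}\cap Q_3\neq\emptyset$ to $G^-_{M_2}\cap Q_{1/(4\sqrt n)}\neq\emptyset$) and a measure step (Lemma~\ref{meas_lem}) proved by \emph{sliding paraboloids} and the area formula: one slides opening-$K$ paraboloids with vertices in $V=Q_{1/(4\sqrt n)}$ until they touch $v$ from below, and estimates $|V|\le\int_E\det(I_n+K^{-1}D^2v)$ via the eigenvalue bound $-KI_n\le D^2v\le K\tfrac{(n-1)\Lambda}{\lambda}I_n$ at contact points. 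This gives $1-\sigma=\bigl(\tfrac{\lambda}{\lambda+(n-1)\Lambda}\bigr)^n(4\sqrt n)^{-n}$, a quantity \emph{independent of $M$}. Your ABP route instead produces $1-\sigma=\mu_0/2=C_2^{-n}/2$, and $C_2$ carries the $L^n$ norm of $\mathcal M^+(D^2\varphi)$ on $Q_1$; since the barrier $\varphi$ already has $K_0$, $C_1$ growing like $e^{c(n)\Lambda/\lambda}$, you end up with $1-\sigma\approx 1/M$ and hence $\varepsilon=\log(1/\sigma)/\log M$ decaying \emph{exponentially} in $\Lambda/\lambda$. So your closing claim that ``tracking the precise dependence of $M$ on $K_0,L_0,n$ \ldots\ feeds into the polynomial decay estimate'' is wrong: bypassing ABP, not bookkeeping through it, is exactly the paper's point.
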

Given Lemma \ref{touchCC}, we find that the exponent $\e$ in the $W^{2,\e}$ estimates can be taken to be $\frac{\log \frac{1}{\sigma}}{\log M}$; see Theorem \ref{W2delta_thm}. When $\sigma $ is small, $\e\approx \frac{1-\sigma}{\log M}$.
A careful tracing of the constants in the proofs of $W^{2,\e}$ estimates in \cite[Proposition 7.4]{CC}, \cite[Lemma 5.15]{HL} and \cite{ASS} reveals that, for a fixed dimension $n$, the exponent $\e$ decays exponentially 
with respect to the ratio $\frac{\Lambda}{\lambda}$ of the ellipticity constants of the equations. One of the reasons comes from the use of the Aleksandrov-Bakelman-Pucci (ABP) maximum principle \cite[Theorem 3.2]{CC} applied to the barrier constructed in \cite[Lemma 4.1]{CC}.
This application gives the measure estimate in \cite[Lemma 4.5 and Lemm 7.5]{CC} and also in Lemma \ref{touchCC} together with the value of $\sigma$ in Lemma \ref{touchCC} of the form 
$\sigma\approx 1-\frac{1}{M}$ for $M\approx e^{(n-1)\frac{\Lambda}{\lambda}}$ when $\frac{\Lambda}{\lambda}$ is large.

Our polynomial decay for $\e$ in Theorems \ref{improve2} and \ref{improve3} comes from an improvement of $(1-\sigma)$ in the measure estimate; see Lemma \ref{meas_lem}. To obtain the measure estimate, we use the method of sliding paraboloids and the area formula as in \cite{Cab, L, Sa} to bypass the ABP estimate. An important feature of our measure estimate is that $\sigma$ can be estimated {\it independently of $M$}. Moreover, it can potentially be applicable to singular and degenerate elliptic equations as in the case of the Harnack inequality in \cite{L}. The constant $M$
comes from the localization Lemma \ref{construct_lem}. Its proof, which is based on the construction of a suitable subsolution, is standard; see also \cite[Lemma 4.1]{CC} and \cite[Lemma 5.13]{HL}.

We have tried to make explicit all constants in our estimates. Obviously, there are lot of rooms for improvement of their numerical values. It would be interesting to lower the exponent $(n+1)$ in the decay rate $(\Lambda/\lambda)^{-(n+1)}$ in Theorem \ref{improve2}
and $n$ in Theorem \ref{improve3}.

The rest of the paper is organized as follows. In Section \ref{measure_sec}, we prove a measure estimate in Lemma \ref{meas_lem} for viscosity supersolutions
 and in Lemma \ref{st_meas_lem} for strong supersolutions
 and a localization result in Lemma \ref{construct_lem}.
The proofs of Theorems \ref{improve2} and \ref{improve3} will be given in Section \ref{pf_sec}.
\section{Measure estimate and localization}
\label{measure_sec}

Throughout this section, $\lambda\leq\Lambda$ are positive constants.

Our first lemma is a measure estimate. It roughly says that if a viscosity supersolution can be touched from below at a point in a small cube by a paraboloid of some fixed opening then it can be touched
from below at a set of positive measure in a larger cube by paraboloids of larger opening. More precisely, it states as follows.
\begin{lem}[Measure estimate for viscosity supersolutions]\label{meas_lem}  
Assume that $\overline{Q}_2\subset \Omega\subset\R^n$.
Suppose that $v\in C(\Omega)$ satisfies
 $\mathcal{M}^{-}_{\lambda,\Lambda} (D^2 v)\leq 0$ in $\Omega$.
Assume that 
 $G_{1/n}^{-}(v,\Omega)\cap Q_{\frac{1}{4\sqrt{n}}}\neq \emptyset.$
Then 
 $$|G^{-}_{32}(v,\Omega)\cap Q_1|\geq (1-\sigma) |Q_1|$$
 for
 $$\sigma:=
1- \left(\frac{\lambda}{\lambda + (n-1)\Lambda}\right)^n\left(\frac{1}{4\sqrt{n}}\right)^n.$$
 \end{lem}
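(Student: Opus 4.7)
My plan is to slide paraboloids of a fixed opening $K := 32$ from below and apply the area formula, following \cite{Cab, L, Sa}. This bypasses the ABP barrier and yields a $\sigma$ that does not involve $K$. Let $\bar p \in \R^n$ be the slope of a paraboloid of opening $1/n$ that supports $v$ from below at the hypothesized point $\bar x \in G^{-}_{1/n}(v,\Omega) \cap Q_{1/(4\sqrt n)}$. As parameter set I take the shifted cube
$$E := Q_{1/(4\sqrt n)}(\bar x + \bar p / K), \qquad |E| = (1/(4\sqrt n))^n,$$
and for each $y \in E$ I let $x_y$ be a minimizer of $x \mapsto v(x) + \frac{K}{2}|x - y|^2$ over $\overline{Q}_2$.

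The first task is localization: I need $x_y \in Q_1$ and I also need the resulting paraboloid $P_y(x) := v(x_y) + \frac{K}{2}|x_y - y|^2 - \frac{K}{2}|x - y|^2$ to lie below $v$ on all of $\Omega$, so that $x_y \in G^{-}_{K}(v,\Omega)$. Comparing the value at any competitor $x$ with the value at $\bar x$ and using the given paraboloid support yields, for every $x \in \Omega$,
$$v(x) + \frac{K}{2}|x-y|^2 - \bigl[v(\bar x) + \frac{K}{2}|\bar x - y|^2\bigr] \ge (x - \bar x) \cdot (\bar p + K(\bar x - y)) + \frac{K - 1/n}{2}|x - \bar x|^2.$$
Since $\bar p + K(\bar x - y) = K[(\bar x + \bar p/K) - y]$, the choice of $E$ forces $|\bar p + K(\bar x - y)| \le K/8 = 4$; meanwhile $|x - \bar x| \ge 1/2 - 1/(8\sqrt n)$ for $x \in \partial Q_1$ and $|x - \bar x| \ge 1 - 1/(8\sqrt n)$ for $x \in \Omega \setminus \overline{Q}_2$. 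With $K = 32$ and $n \ge 2$, Cauchy--Schwarz then reduces both cases to elementary inequalities that are easily satisfied. Hence the minimum is attained inside $Q_1$ and $P_y$ lies below $v$ throughout $\Omega$.

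The second task is the Jacobian estimate. Since each $x_y \in G^{-}_{K}(v,\Omega)$ carries supporting gradient $-K(x_y - y)$, the map $\Phi(x) := x + K^{-1}\nabla v(x)$ satisfies $\Phi(x_y) = y$ wherever $v$ is differentiable at $x_y$, so $E \subset \Phi(Q_1 \cap G^{-}_{K}(v,\Omega))$ modulo a null set. The function $v + \frac{K}{2}|\cdot|^2$ admits an affine support from below at every point of $G^{-}_{K}(v,\Omega)$; Alexandrov's theorem therefore gives twice-differentiability of $v$ a.e.\ on this set. At such a point, $D^2 v \ge -K I_n$ from the paraboloid, while $\mathcal{M}^{-}_{\lambda,\Lambda}(D^2 v) \le 0$ combined with this lower bound forces every eigenvalue of $D^2 v$ to lie in $[-K, \tfrac{(n-1)\Lambda}{\lambda}K]$. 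Consequently $\det(I + K^{-1}D^2 v) \le \bigl(\tfrac{\lambda + (n-1)\Lambda}{\lambda}\bigr)^n$ almost everywhere on $Q_1 \cap G^{-}_{K}(v,\Omega)$, and the area formula for Lipschitz maps yields
$$(1/(4\sqrt n))^n = |E| \le \int_{Q_1 \cap G^{-}_{K}(v,\Omega)} \det(I + K^{-1}D^2 v)\, dx \le \left(\frac{\lambda + (n-1)\Lambda}{\lambda}\right)^{\!n} |Q_1 \cap G^{-}_{K}(v,\Omega)|,$$
which rearranges to the desired lower bound on $|Q_1 \cap G^{-}_{32}(v,\Omega)|$.

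The main obstacle is the first task: I need a single pair $(K, E)$ that simultaneously forces the sliding minimizer into $Q_1$ \emph{and} keeps the resulting paraboloid valid on the whole of $\Omega$, despite having no a priori bound on $|\bar p|$ or $\|v\|_{L^\infty}$. Centering $E$ at the unknown point $\bar x + \bar p/K$ is what cancels $\bar p$ from the linear term in the key estimate above, and the constant $K = 32$ is chosen to dominate both the interior gap $1/2 - 1/(8\sqrt n)$ relative to $Q_1$ and the exterior gap $1 - 1/(8\sqrt n)$ relative to $\overline{Q}_2$. Once these geometric constraints are aligned, the eigenvalue sandwich and the area-formula step are routine.
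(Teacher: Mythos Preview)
Your overall strategy---slide paraboloids of opening $K=32$ from below, show the contact set lies in $G^-_K(v,\Omega)\cap Q_1$, then bound its measure via the area formula and the eigenvalue sandwich $-KI_n\le D^2v\le \tfrac{(n-1)\Lambda}{\lambda}KI_n$---is exactly the paper's approach. Your localization (first task) is a pleasant variant: rather than subtracting the affine part $L$ of the touching paraboloid and then invoking the convexity of the sublevel set $W=\{1-\tfrac{1}{2n}|z-x^\ast|^2+\tfrac{K}{2}|z-y|^2\le 2\}$ to pass from $\partial Q_1$ to $\Omega\setminus Q_1$ (as the paper does), you center the vertex cube at $\bar x+\bar p/K$ so that the unknown slope $\bar p$ drops out of the linear term, and a single quadratic inequality handles both $\overline Q_2\setminus Q_1$ and $\Omega\setminus\overline Q_2$. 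That part is correct.

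The genuine gap is in the second task. The sentence ``$v+\tfrac{K}{2}|\cdot|^2$ admits an affine support from below at every point of $G^-_K(v,\Omega)$; Alexandrov's theorem therefore gives twice-differentiability of $v$ a.e.\ on this set'' is not a valid application of Alexandrov: having an affine support at each point of a (possibly non-open, non-convex) set does not make the function semiconvex, and twice-differentiability of $v$ there does not follow. Consequently neither the assertion that $\Phi(x)=x+K^{-1}\nabla v(x)$ is Lipschitz nor the ``area formula for Lipschitz maps'' is justified, and the inclusion $E\subset\Phi(Q_1\cap G^-_K)$ ``modulo a null set'' is unproven. The paper closes exactly this gap by a preliminary inf-convolution (its Step~2): $v_\delta(x)=\inf_{y\in\overline Q_2}\{v(y)+\delta^{-1}|y-x|^2\}$ is semiconcave, so Alexandrov applies and $\Phi$ is genuinely Lipschitz; one proves the measure bound for $v_\delta$ with constants independent of $\delta$ and passes to the limit via $\limsup_k E_{1/k}\subset E$. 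Alternatively you could work with the convex envelope $\Gamma$ of $v+\tfrac{K}{2}|\cdot|^2$ over $\overline Q_2$: on the contact set $\Gamma$ coincides with $v+\tfrac{K}{2}|\cdot|^2$, its Monge--Amp\`ere measure dominates $|KE|$, and the viscosity inequality applied to the second-order Taylor polynomial of $\Gamma$ (which touches $v$ from below) yields $\det D^2\Gamma\le K^n\bigl(\tfrac{\lambda+(n-1)\Lambda}{\lambda}\bigr)^n$. Either fix is standard, but as written your argument omits it.
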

\begin{proof}[Proof of Lemma \ref{meas_lem}] 
For simplicity, we denote
$$\alpha_1=\frac{1}{4\sqrt{n}}<\frac{1}{4}.$$
{\it Step 1:} We first consider the case when $v$ is uniformly semiconcave in $Q_{3/2}$, that is, the graph of $v$ admits at all points in $Q_{3/2}$ a touching paraboloid of opening $m$ from above.

From $G_{1/n}^{-}(v,\Omega)\cap Q_{\alpha_1}\neq \emptyset$, we can find an affine function $L(x)$ such that
$$v(x) \geq L(x)-\frac{1}{2n}|x-x^\ast|^2~\text{for all } x\in\Omega~\text{with equality at } x^\ast\in Q_{\alpha_1}.$$
By considering $v-L +1$ instead of $v$, we can assume that
$$v(x) \geq 1-\frac{1}{2n}|x-x^\ast|^2~\text{for all } x\in\Omega~\text{with equality at } x^\ast\in Q_{\alpha_1}.$$
Consider the set of vertices $V=Q_{\alpha_1}$. As in \cite{Sa},
 for each $y\in V$, we slide 
 the  paraboloids $$ -\frac{K}{2} |x-y|^2 + C_y$$ of opening $K>0$ until they touch
 the graph of $v$ from below at some point $x\in \overline Q_1$, called the contact point.
 We define the contact set by
\begin{equation*}E_K (V, Q_1, v)=\{ x\in \overline{Q_1}: \text{there is } y\in V~\text{such that } \inf_{\overline{Q_1}} \left(v + \frac{K}{2} |\cdot-y|^2\right)= v(x)
+ \frac{K}{2} |x-y|^2\}.
\end{equation*}
{\bf Claim 1.}  With $K=32$, we have the following:
\begin{equation}
\label{insideQ}
E_K (V, Q_1, v)\subset Q_1,
\end{equation}
\begin{equation}
\label{D2_contact}
E_K (V,  Q_1, v) \subset G^{-}_{K}(v, \Omega).
\end{equation}

Indeed, for each $y\in V$, we consider the function
 $$P(x) = v(x) + \frac{K}{2} |x-y|^2$$
 and look for its minimum points on $\overline{Q_1}$. 

If $x\in\p Q_1$, then $|x-y|\geq \frac{1-\alpha_1}{2}>\frac{3}{8}$ and hence 
 \begin{equation}P(x)\geq 1-\frac{1}{2n}|x-x^\ast|^2  + \frac{K}{2} |x-y|^2 > \frac{K}{2} |x-y|^2   > 16 \frac{3^2}{8^2}>2.
  \label{PbdrS1}
 \end{equation}
Note that 
$|x^\ast-y|^2 < n\alpha_1^2 =\frac{1}{16}.$
Therefore
 \begin{equation}P(x^\ast)= v(x^\ast) + \frac{K}{2} |x^\ast-y|^2= 1 + 16 |x^\ast-y|^2<2 .
\label{Px0}  
 \end{equation}
From (\ref{PbdrS1}) and (\ref{Px0}), we deduce that $P$ attains its minimum on $\overline{Q_1}$ at a point $x\in Q_1$. 
Hence  $E_K (V, Q_1, v)\subset Q_1$, proving (\ref{insideQ}).

It remains to prove (\ref{D2_contact}). For each contact point $x\in E_K (V, Q_1, v)\subset Q_1$, let $y\in V$  be such that
\begin{equation} 
\label{contact_eq}
v(x)
+ \frac{K}{2} |x-y|^2 \leq v(z)
+ \frac{K}{2} |z-y|^2~\text{for all~} z\in\overline{Q_1}.
\end{equation}
We show that
$x\in G^{-}_{K}(v, \Omega)$
and consequently, (\ref{D2_contact}) holds. For this, it is crucial to note that (\ref{contact_eq}) also holds for all $z\in \Omega$, that is,
\begin{equation}
\label{contact_eq2}
v(x)
+ \frac{K}{2} |x-y|^2\leq v(z)
+ \frac{K}{2} |z-y|^2~\text{for all~} z\in\Omega.
\end{equation}
Indeed, it suffices to verify (\ref{contact_eq2}) for $z\in \Omega\backslash Q_1$. In this case, we use
\begin{equation}
 1-\frac{1}{2n}|z-x^\ast|^2  + \frac{K}{2} |z-y|^2 >2.
\label{convex_arg}
\end{equation}
Indeed, let
$$W=\{z\in \Omega: 1-\frac{1}{2n}|z-x^\ast|^2  + \frac{K}{2} |z-y|^2\leq 2\}.$$
It suffices to show that $W\subset Q_1.$ Indeed, we first note that 
$W$ is convex and $y\in W.$ If $z\in \p Q_1$ then  by (\ref{PbdrS1}), we have $z\not\in W$. Thus, the convexity of $W$ implies that $W\subset Q_1$.

Now, consider $z\in \Omega\backslash Q_1$. Then,  in view of (\ref{Px0}) and (\ref{convex_arg}), we find that (\ref{contact_eq2}) follows from
$$v(z) +  \frac{K}{2} |z-y|^2\geq 1-\frac{1}{2n}|z-x^\ast|^2  + \frac{K}{2} |z-y|^2
> 2> P(x^\ast) \geq v(x)
+ \frac{K}{2} |x-y|^2.$$
By the minimality of $P$ at $x$ (see (\ref{contact_eq2})), we have 
$D v(x) + K(x-y)=0$
which gives
\begin{equation}
 \label{uuv}y = x +\frac{1}{K}D v(x)
\end{equation}
From the minimality of $P$ at $x$, we also have 
\begin{equation}
\label{uvD2}
D^2 v(x) \geq -K I_n.
\end{equation}
From (\ref{contact_eq2}) and (\ref{uuv}), we deduce that for all $z\in\Omega$, 
\begin{eqnarray*}
v(z)\geq v(x) -\frac{K}{2} (|z|^2-|x|^2) + K y\cdot (z-x) 
= v(x) + Dv(x) \cdot(z-x) - \frac{K}{2}|z-x|^2.
\end{eqnarray*}
Therefore $x\in G^{-}_{K}(v,\Omega)$, completing the proof of Claim 1.

Before proceeding further, we note from the proof of (\ref{insideQ}) that for each $y\in V$, there is $x\in E:=E_K (V, Q_1, v)$ such that (\ref{uuv}) holds, that is $y= \Phi(x)$ where $$\Phi(x) = x + \frac{1}{K}D v(x).$$
 It follows that $V\subset \Phi(E).$ It is easy to see that $\Phi$ is Lipschitz on $E$ with Lipschitz constant bounded by C(m). By (\ref{uvD2}), we have
 $$D\Phi(x) =I_n + \frac{1}{K}D^2 v(x)\geq 0~\text{on } E.$$
 Moreover,  by definition, $E$ is a closed set and thus measurable.
 By the area formula, we have
 \begin{equation}
 \label{CoA}
 |V|\leq |\Phi(E)|= \int_{E} \det D\Phi (x) dx = \int_{E} \det (I_n +\frac{1}{K} D^2 v (x)) dx.
 \end{equation}
It remains to estimate from above the integrand in (\ref{CoA}).

Denote by $\mathcal{N}$ the set of points $x\in\Omega$ for which $v$ can be approximated by a quadratic polynomial near $x$, that is,
\begin{equation}
\label{v_Taylor}
v(z) = P(x, z) + o(|z-x|^2),
\end{equation}
where $$P(x, z)= v(x) + p(x)\cdot (z-x) + \frac{1}{2} (z-x)^{T} M(x) (z-x); M(x)\in \mathcal{S}_n.$$
Since $v$ is semi-concave, the Aleksandrov theorem (see \cite[Section 6.4]{EG}) tells us that $$|\Omega\backslash \mathcal{N}|=0.$$
{\bf Claim 2.} If $x\in E_K (V, Q_1, v)\cap \mathcal{N}$ then 
\begin{equation}
\label{D2v_ineq}
-K I_n \leq D^2 v(x) =M(x)\leq K\frac{(n-1)\Lambda}{\lambda} I_n.
\end{equation} 
The left inequality of (\ref{D2v_ineq}) follows from (\ref{uvD2}). It remains to prove the inequality on the right hand side of (\ref{D2v_ineq}).
From (\ref{v_Taylor}), we know that for all $\delta>0$ small, $$P(x, z)-\frac{\delta}{2}|z-x|^2 + const$$ touches $v(z)$ from below in a neighborhood of $x$ at some point $\tilde x$. Since $v$ is a viscosity supersolution, we find
\begin{equation}
\label{D2v_eq}
\mathcal{M}^{-}_{\lambda,\Lambda} (M(x)-\delta I_n)\leq 0.
\end{equation}
Assume by contradiction that the largest eigenvalue of $M(x)$ is $C>K\frac{(n-1)\Lambda}{\lambda}.$
Then, from (\ref{D2v_eq}) and the definition of $\mathcal{M}^{-}_{\lambda,\Lambda}$, we find that
$ \lambda (C-\delta)-(n-1) \Lambda (K +\delta)\leq 0.$
By letting $\delta\rightarrow 0$, we obtain
$C \leq K\frac{(n-1)\Lambda}{\lambda}$ and hence a contradiction with $C>K\frac{(n-1)\Lambda}{\lambda}.$
Thus, (\ref{D2v_ineq}) is proved.

From (\ref{D2v_ineq}), we find that for $x\in E_K (V, Q_1, v)\cap \mathcal{N}$,
\begin{equation}
\label{west}
\det (I_n + \frac{1}{K} D^2 v(x))  \leq  \left(1+ \frac{(n-1)\Lambda}{\lambda}\right)^n.
\end{equation}
Using (\ref{CoA}) and (\ref{west}), we get
\begin{eqnarray*} |V|\leq \int_E  \det (I_n + \frac{1}{K} D^2 v(x)) dx= \int_{E\cap \mathcal{N}}  \det (I_n + \frac{1}{K} D^2 v(x))dx
\leq  \left(1+ \frac{(n-1)\Lambda}{\lambda}\right)^n |E\cap \mathcal{N}|.
 \end{eqnarray*}
Recalling $V=Q_{\alpha_1}$, it follows that
$$|E|\geq \left(\frac{\lambda}{\lambda + (n-1)\Lambda}\right)^n|V|=\left(\frac{\lambda}{\lambda + (n-1)\Lambda}\right)^n\left(\frac{1}{4\sqrt{n}}\right)^n|Q_1|=: c_0 |Q_1|.$$
Using (\ref{D2_contact}) and $K=32$, the conclusion of the lemma follows with $\sigma=1-c_0$.\\
{\it Step 2:} Now we treat the general case without assuming that $v$ is semiconcave. For this, we regularize $v$ by the standard method of inf-convolution. Let
$$v_\delta(x) = \inf_{y\in\overline{Q}_2}\left\{ u(y) +\frac{1}{\delta}|y-x|^2\right\}, x\in Q_2.$$ 
It is easy to check that $v_\delta$ is semiconcave and $v_\delta\rightarrow v$ uniformly on compact subsets of $Q_2$. Moreover $\mathcal{M}^{-}_{\lambda,\Lambda} (D^2 v_\delta)\leq 0$ in $Q_{3/2}$; see, for example, the remark after Theorem 5.1 in \cite{CC}.
By the above proof, we find
$$|E_\delta| \geq c_0|Q_1|$$
where $E_\delta$ is the corresponding touching set for $v_\delta$, that is, $E_\delta= E_K(V, Q_1,v_\delta)$. It is easy to check that
$$\displaystyle \limsup E_{1/k}=\bigcap_{m=1}^{\infty}\bigcup_{k=m}^{\infty} E_{1/k}\subset E.$$
Thus we conclude that $|E|\geq c_0|Q_1|.$
\end{proof}

For strong supersolutions, we have the following measure estimate.
\begin{lem}[Measure estimate for strong supersolutions]\label{st_meas_lem}  
Assume that $\overline{Q}_2\subset \Omega\subset\R^n$.
Assume $(a^{ij}(x))\in \mathcal{S}_n$ satisfies 
$\lambda I_n\leq (a^{ij}(x))\leq \Lambda I_n$ a.e. in $\Omega$.
Suppose that $v \in W^{2,n}(\Omega)$ satisfies the inequality
$ a^{ij} v_{ij} \leq 0 $ in $ \Omega$.
Assume that 
 $G_{1/n}^{-}(v,\Omega)\cap Q_{\frac{1}{4\sqrt{n}}}\neq \emptyset$.
Then 
 $$|G^{-}_{32}(v,\Omega)\cap Q_1|\geq (1-\sigma_1) |Q_1|
\text{ for }
 \sigma_1:=
1- \left(\frac{\lambda}{\Lambda}\right)^{n-1}\left(\frac{1}{4\sqrt{n}}\right)^n.$$
 \end{lem}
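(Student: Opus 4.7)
The plan is to follow Lemma \ref{meas_lem} essentially verbatim, but to exploit two simplifications afforded by the hypothesis $v \in W^{2,n}$. First, $v$ is already twice differentiable Lebesgue-a.e.\ (with a pointwise second-order Taylor expansion at such points), so the inf-convolution step used in Step 2 of the viscosity proof is unnecessary. Second, the pointwise equation $a^{ij}v_{ij} \leq 0$ a.e.\ gives a genuinely linear (not just extremal) constraint on $D^2 v$ at contact points, and it is this sharper constraint that produces the improved exponent $(\lambda/\Lambda)^{n-1}$ in place of $(\lambda/(\lambda + (n-1)\Lambda))^n$.

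I would take $\alpha_1 = 1/(4\sqrt n)$, $V = Q_{\alpha_1}$, and slide paraboloids of opening $K = 32$ from vertices $y \in V$ to form the contact set $E = E_K(V, Q_1, v)$. Claim 1 of Lemma \ref{meas_lem}, which yields $E \subset Q_1$, $E \subset G_K^-(v, \Omega)$, and the identity $y = \Phi(x) := x + \tfrac{1}{K}Dv(x)$, uses only continuity of $v$ and the normalization $v \geq 1 - \tfrac{1}{2n}|\cdot - x^*|^2$, so it applies unchanged. The area formula then yields $|V| \leq \int_E \det(I_n + \tfrac{1}{K} D^2 v)\, dx$. At a.e.\ $x \in E$, the matrix $W := I_n + \tfrac{1}{K} D^2 v(x)$ is PSD (from $D^2 v \geq -K I_n$ at touching points) and satisfies $\text{tr}(A(x) W) \leq \text{tr}(A(x))$ (by rewriting the PDE), with $\lambda I_n \leq A(x) \leq \Lambda I_n$.

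The crucial matrix inequality is $\det W \leq (\Lambda/\lambda)^{n-1}$. Diagonalizing $W$ (which conjugates $A$ into a symmetric matrix with diagonal entries still in $[\lambda,\Lambda]$) reduces this to bounding $\prod w_i$ over $w_i \geq 0$ and $a_i \in [\lambda, \Lambda]$ with $\sum a_i w_i \leq \sum a_i$. A Lagrange-multiplier computation gives $\max_w \prod w_i = (\tfrac{1}{n}\sum a_i)^n / \prod a_i$ for fixed $a_i$, and since the partial derivatives $\partial_{a_i}$ of this expression vanish only at the diagonal $a_1 = \cdots = a_n$, the maximum over $a_i \in [\lambda, \Lambda]$ is attained at a corner with $k$ entries equal to $\Lambda$ and $n-k$ equal to $\lambda$. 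That in turn reduces to the scalar inequality
\[
k r + (n-k) \leq n r^{(n-1+k)/n}, \qquad r = \Lambda/\lambda \geq 1, \qquad k = 0, 1, \ldots, n,
\]
which is verified by noting equality at $r = 1$ and checking that the right-hand side has a larger derivative (in fact $(n-1+k) r^{(k-1)/n} \geq k$ for all $r \geq 1$). Substituting $\det W \leq (\Lambda/\lambda)^{n-1}$ into the area-formula estimate gives $|V| \leq (\Lambda/\lambda)^{n-1} |E|$, hence
\[
|E| \;\geq\; (\lambda/\Lambda)^{n-1} \bigl(1/(4\sqrt n)\bigr)^n |Q_1|,
\]
completing the proof with the stated $\sigma_1$.

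The main obstacle is the sharp matrix inequality $\det W \leq (\Lambda/\lambda)^{n-1}$. A naive bound $\text{tr}(W) \leq n\Lambda/\lambda$ followed by AM--GM yields only $(\Lambda/\lambda)^n$, which would merely replicate the viscosity estimate; extracting the extra factor of $\lambda/\Lambda$ genuinely requires the constrained optimization (or equivalent argument) above. Once that inequality is established, everything else is a straightforward transcription of Lemma \ref{meas_lem}.
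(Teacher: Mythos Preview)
Your proposal is correct and follows the same overall architecture as the paper: slide paraboloids of opening $K=32$ from vertices in $Q_{1/(4\sqrt n)}$, use the area formula $|V|\le\int_E\det(I_n+\tfrac1K D^2v)$, and bound the integrand by $(\Lambda/\lambda)^{n-1}$ at a.e.\ contact point. The only real difference is in how the determinant bound is obtained.

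The paper's route to $\det W\le(\Lambda/\lambda)^{n-1}$ is shorter: it applies the matrix AM--GM inequality $\trace(AW)\ge n(\det A)^{1/n}(\det W)^{1/n}$ directly (with $A=(a^{ij}(x))$ and $W=I_n+\tfrac1K D^2v(x)\ge0$), giving $(\det W)^{1/n}\le\trace A/(n(\det A)^{1/n})$; then the eigenvalue estimate $\trace A\le n\lambda_n$, $\det A\ge\lambda_1^{\,n-1}\lambda_n$ yields the bound in one line. Your approach instead diagonalizes $W$, reduces to maximizing $\prod w_i$ under $\sum a_iw_i\le\sum a_i$ with $a_i\in[\lambda,\Lambda]$, and then optimizes over the $a_i$. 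This arrives at the same quantity $(\tfrac1n\sum a_i)^n/\prod a_i$ that the paper bounds, so the two arguments converge; yours just reaches it via Lagrange multipliers rather than the one-shot matrix inequality. Your final optimization over the cube is correct in conclusion, though the inference ``unique interior critical point $\Rightarrow$ maximum at a corner'' needs an extra word (e.g.\ repeat the critical-point analysis on each face, or simply observe $\sum a_i\le n\max_i a_i$ and $\prod a_i\ge(\min_i a_i)^{n-1}\max_i a_i$, which is exactly the paper's eigenvalue bound in disguise and bypasses the corner analysis entirely).
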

\begin{proof}[Proof of Lemma \ref{st_meas_lem}]
The proof is similar to that of Lemma \ref{meas_lem}. Instead of (\ref{west}), we have the improved estimate:
\begin{equation}
\label{st_west}
\det (I_n + \frac{1}{K} D^2 v(x))  \leq  \left(\frac{\Lambda}{\lambda}\right)^{n-1}~\text{for all } x\in E_K (V, Q_1, v)\cap \mathcal{N}.
\end{equation}
We indicate how to obtain this estimate. For $x\in E_K (V, Q_1, v)\cap \mathcal{N}$, we have $I_n + \frac{1}{K} D^2 v(x)\geq 0$
and from $a^{ij}(x) v_{ij}(x)\leq 0$, we find that
$a^{ij}(x) (\delta_{ij} + \frac{1}{K} v_{ij}(x) )\leq \trace (a^{ij}(x)).$
Using the inequality
\begin{equation}\trace (AB)\geq n(\det A)^{1/n} (\det B)^{1/n}~\text{for } A, B\geq 0~\text{in } \mathcal{S}_n,
\end{equation}
we obtain
\begin{equation}
\label{detA}
 \trace (a^{ij}(x)) \geq n (\det (a^{ij}(x)))^{1/n} (\det (I_n + \frac{1}{K} D^2 v(x)))^{1/n} ~\text{for all } x\in E_K (V, Q_1, v)\cap \mathcal{N}.
\end{equation}
Let $\lambda_1(x)\leq \lambda_2(x)\leq\cdots\leq\lambda_n(x)$ be the eigenvalues of $(a^{ij}(x))$. Then $\lambda_i(x)\in [\lambda,\Lambda]$ for all $i=1,\cdots, n$. We estimate
$$\frac{ \trace (a^{ij}(x))}{ n (\det (a^{ij}(x)))^{1/n}}\leq \frac{n\lambda_n(x)}{n (\lambda_1(x)^{n-1} \lambda_n(x) )^{1/n}}=\left(\frac{\lambda_n(x)}{\lambda_1(x)}\right)^{\frac{n-1}{n}}\leq \left(\frac{\Lambda}{\lambda}\right)^{\frac{n-1}{n}}.$$
Now, (\ref{st_west}) follows from (\ref{detA}) and the above estimates.
\end{proof}
The following lemma says that for a bounded, continuous function in a domain $\Omega$ containing $Q_3$, it can be touched from below at a point in $Q_3$ by a paraboloid of opening propositional to its sup norm.
This fact is well known. However, since we would like to keep track all constants in this paper, we write down its precise formulation. 
\begin{lem}
\label{touch1_lem}
Assume that $\overline{Q}_3\subset \Omega\subset\R^n$.
If $v\in C(\Omega)$ with $|v|\leq \frac{1}{4}$ in $ \Omega$ then $G_1^{-}(v,\Omega)\cap Q_3\neq \emptyset$.
\end{lem}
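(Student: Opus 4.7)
The plan is to slide a paraboloid of opening $1$ from below and locate the contact point inside $Q_3$ by comparing values at the origin with values on $\partial Q_3$. Concretely, I consider the auxiliary function
\begin{equation*}
g(x) := v(x) + \tfrac{1}{2}|x|^{2}, \qquad x\in\Omega.
\end{equation*}
At the vertex $x=0\in Q_3$, the bound $|v|\leq 1/4$ yields $g(0)=v(0)\leq 1/4$, while for any $x\in\Omega\setminus Q_3$ one has $\max_i|x_i|\geq 3/2$, hence $|x|^{2}\geq 9/4$ and
\begin{equation*}
g(x)\geq -\tfrac14+\tfrac{9}{8}=\tfrac{7}{8}>\tfrac14\geq g(0).
\end{equation*}

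Consequently, $\inf_{\Omega}g=\inf_{\overline{Q}_3}g$, and since $\overline{Q}_3$ is compact and $g$ is continuous, this infimum is attained at some $\bar x\in\overline{Q}_3$. The boundary estimate above shows $\bar x\notin\partial Q_3$, so in fact $\bar x\in Q_3$, and by construction
\begin{equation*}
v(x)+\tfrac12|x|^{2}\;\geq\; v(\bar x)+\tfrac12|\bar x|^{2}\qquad\text{for every } x\in\Omega.
\end{equation*}

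The last step is to rewrite this inequality as a paraboloid-touching condition. Expanding $\tfrac12|x-\bar x|^{2}=\tfrac12|x|^{2}-\bar x\cdot x+\tfrac12|\bar x|^{2}$, the choice $p=-\bar x$ gives the algebraic identity
\begin{equation*}
v(\bar x)+p\cdot(x-\bar x)-\tfrac12|x-\bar x|^{2}=v(\bar x)+\tfrac12|\bar x|^{2}-\tfrac12|x|^{2},
\end{equation*}
so that the displayed inequality becomes exactly $v(x)\geq v(\bar x)+p\cdot(x-\bar x)-\tfrac12|x-\bar x|^{2}$ for all $x\in\Omega$. By Definition \ref{GMs}, this says $\bar x\in G_1^{-}(v,\Omega)$, and since $\bar x\in Q_3$, the lemma follows. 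There is no real obstacle here; the only point requiring care is the strict separation $7/8>1/4$ at $\partial Q_3$, which is why the hypothesis $|v|\leq 1/4$ (rather than some larger bound) is used, and which is what forces the contact point to lie in the open cube $Q_3$ rather than merely in $\overline{Q}_3$.
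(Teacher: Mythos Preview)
Your proof is correct and follows essentially the same approach as the paper: slide a paraboloid of opening $1$ from below, compare the value at the vertex with values on $\partial Q_3$ (and on $\Omega\setminus Q_3$) to force the contact point into the open cube, and then rewrite the minimum condition as membership in $G_1^{-}(v,\Omega)$. The only cosmetic difference is that the paper centers the paraboloid at an arbitrary $y\in Q_{1/2}$ while you take $y=0$; this changes nothing of substance.
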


\begin{proof}
Fix $y\in Q_{1/2}$.
Consider the function
 $P(x) = v(x) + \frac{1}{2} |x-y|^2$
 and look for its minimum points on $\overline{Q_3}$.  At $y$, we have 
 $P(y) = v(y) \leq \frac{1}{4}.$ If $x\in\p Q_3$, then $|x-y|>\frac{5}{4}>1$ and hence 
 $P(x)\geq -\frac{1}{4} + \frac{1}{2} |x-y|^2  > \frac{1}{4}.$
It follows that $P$ attains its minimum on $\overline{Q_3}$ at a point $x_0\in Q_3$ with $P(x_0)\leq \frac{1}{4}$. We show that $x_0\in G_1^{-}(v,\Omega)$. To see this, it remains to show that
$P(x_0)\leq P(z)$ for all $z\in\Omega\backslash Q_3$.
Indeed, when $z\in\Omega\backslash Q_3$, we have $|z-y|>\frac{5}{4}>1$ 
and hence
\begin{equation*}
 P(z)=v(z)
+ \frac{1}{2} |z-y|^2 \geq -\frac{1}{4} + \frac{1}{2} |z-y|^2 >\frac{1}{4}\geq P(x_0).
\end{equation*}
\end{proof}

The next lemma is a localization result. 
It roughly says that if a viscosity supersolution can be touched from below at a point in a large cube by a paraboloid of some fixed opening then it can be touched
from below at point in a smaller cube by a paraboloid of larger opening. 
\begin{lem}[Localization for viscosity supersolutions]
\label{construct_lem}
Assume that $\overline{B}_{2\sqrt{n}}\subset\Omega\subset\R^n.$
 Suppose that $v\in C(\Omega)$ satisfies
 $\mathcal{M}^{-}_{\lambda,\Lambda} (D^2 v)\leq 0$ in $\Omega$ and  $G_{\frac{1}{8n}}^{-}(v,\Omega)\cap Q_{3}\neq \emptyset$.
Then the following assertions hold. 
 \begin{myindentpar}{1cm}
 (i) $\inf_{\overline{Q}_{\frac{1}{12\sqrt{n}}}} (v-L+1)\leq M_1$, for an affine function $L$, where $M_1:= 8 (36 n)^{\max\{1,  \frac{(n-1)\Lambda}{\lambda}-1\}}.$\\
 (ii) The set $G_{M_2}^{-}(v,\Omega)\cap Q_{\frac{1}{4\sqrt{n}}}\neq\emptyset$ where
 $ M_2:= 432n M_1.$
\end{myindentpar}
\end{lem}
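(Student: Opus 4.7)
My plan is to handle the two parts in order. First I would normalize using the hypothesis: $G^{-}_{1/(8n)}(v,\Omega) \cap Q_{3}\neq\emptyset$ provides $\bar x \in Q_{3}$ and an affine function $L$ with $v(x) \geq L(x) - \tfrac{1}{16n}|x-\bar x|^{2}$ on $\Omega$ and equality at $\bar x$. Setting $\tilde v := v - L + 1$, I have $\tilde v(\bar x)=1$ and the same Pucci inequality $\mathcal{M}^{-}_{\lambda,\Lambda}(D^{2}\tilde v) \leq 0$. Since $\bar x \in Q_{3}$ gives $|\bar x| \leq \tfrac{3\sqrt n}{2}$, for $x \in \overline{B}_{2\sqrt n}$ one has $|x-\bar x| \leq \tfrac{7\sqrt n}{2}$, hence $\tilde v \geq 1 - \tfrac{49}{64} = \tfrac{15}{64} > 0$ throughout $\overline{B}_{2\sqrt n}$.

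For part (i), I would argue by contradiction via a radial barrier. Assume $\tilde v > M_{1}$ on $\overline{Q}_{1/(12\sqrt n)}$ (the case $\bar x \in Q_{1/(12\sqrt n)}$ is immediate since $\tilde v(\bar x) = 1 < M_{1}$). On the annular region $D := B_{2\sqrt n}\setminus \overline{Q}_{1/(12\sqrt n)}$ I would use
\[
\varphi(x) := A \bigl(|x|^{-\alpha} - (2\sqrt n)^{-\alpha}\bigr), \qquad \alpha := \max\Bigl\{1,\tfrac{(n-1)\Lambda}{\lambda}-1\Bigr\}.
\]
The eigenvalues of $D^{2}|x|^{-\alpha}$ are $\alpha(\alpha+1)|x|^{-\alpha-2}$ (radial) and $-\alpha|x|^{-\alpha-2}$ ($n-1$ tangential copies), so
\[
\mathcal{M}^{-}_{\lambda,\Lambda}(D^{2}\varphi) = A\,\alpha|x|^{-\alpha-2}\bigl[\lambda(\alpha+1)-(n-1)\Lambda\bigr] \geq 0
\]
precisely because of the choice of $\alpha$. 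Setting $A := M_{1}(24\sqrt n)^{-\alpha}$ gives $\varphi \leq M_{1} \leq \tilde v$ on $\partial Q_{1/(12\sqrt n)}$ (where $|x| \geq \tfrac{1}{24\sqrt n}$) and $\varphi = 0 \leq \tilde v$ on $\partial B_{2\sqrt n}$, so the standard Pucci comparison principle (applied to $\varphi-\tilde v$, which is a viscosity subsolution of $\mathcal{M}^{+}_{\lambda,\Lambda}$) yields $\varphi \leq \tilde v$ throughout $D$. Evaluating at $\bar x$ and using $|\bar x| \leq \tfrac{3\sqrt n}{2}$ together with $1-(3/4)^{\alpha} \geq 1/4$ for $\alpha\geq 1$,
\[
1 = \tilde v(\bar x) \geq \varphi(\bar x) \geq \frac{A}{4}\Bigl(\frac{3\sqrt n}{2}\Bigr)^{-\alpha} = \frac{M_{1}}{4(36n)^{\alpha}},
\]
which contradicts the choice $M_{1} = 8(36n)^{\alpha}$.

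For part (ii), I would slide a paraboloid in the spirit of Lemmas~\ref{meas_lem} and~\ref{touch1_lem}. From (i), pick $y \in \overline{Q}_{1/(12\sqrt n)}$ with $\tilde v(y) \leq M_{1}$ and put $P(x) := \tilde v(x) + \tfrac{M_{2}}{2}|x-y|^{2}$. Comparing coordinates, the distance from $y$ to $\partial Q_{1/(4\sqrt n)}$ is at least $\tfrac{1}{12\sqrt n}$, so using $\tilde v \geq 0$ on $\overline{B}_{2\sqrt n}$, one gets $P \geq M_{2}/(288n) = \tfrac32 M_{1} > P(y)$ on $\partial Q_{1/(4\sqrt n)}$; hence $P$ attains its minimum on $\overline{Q}_{1/(4\sqrt n)}$ at some interior point $x_{0}$. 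The crucial step, modeled on the convexity-based extension at the end of Claim~1 in the proof of Lemma~\ref{meas_lem}, is to promote this to a global minimum on $\Omega$: for $z \in B_{2\sqrt n}\setminus Q_{1/(4\sqrt n)}$ the same coordinate estimate gives $P(z) > M_{1}$; for $z \in \Omega \setminus B_{2\sqrt n}$ the original paraboloid lower bound $\tilde v(z) \geq 1 - \tfrac{1}{16n}|z-\bar x|^{2}$ is dominated by the growing quadratic $\tfrac{M_{2}}{2}|z-y|^{2}$ since $M_{2} \gg 1/n$, so again $P(z) > M_{1}$. Expanding $P(z) \geq P(x_{0})$ in the standard way then yields
\[
\tilde v(z) \geq \tilde v(x_{0}) + M_{2}(y-x_{0})\cdot(z-x_{0}) - \tfrac{M_{2}}{2}|z-x_{0}|^{2} \quad \text{for all } z \in \Omega,
\]
which is precisely $x_{0} \in G^{-}_{M_{2}}(v,\Omega) \cap Q_{1/(4\sqrt n)}$.

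The main obstacle is part (i): one has to match the barrier radii $\tfrac{1}{24\sqrt n}$ and $2\sqrt n$ to the cubes $Q_{1/(12\sqrt n)}$ and $Q_{3}$ appearing in the statement, identify the sharp exponent $\alpha$ from the radial Pucci computation, and carry the constant $A$ through so that $M_{1}$ comes out as exactly $8(36n)^{\max\{1,(n-1)\Lambda/\lambda-1\}}$. Part (ii) is then a routine sliding-paraboloid computation inheriting the constants from (i).
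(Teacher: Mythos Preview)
Your proposal is correct and follows essentially the same route as the paper. Both normalize via the touching paraboloid, prove (i) by contradiction using a radial barrier of the form $C(|x|^{-\alpha}-\text{const})$ on an annulus (the paper removes the inscribed ball $B_{1/(24\sqrt n)}$ rather than the cube $Q_{1/(12\sqrt n)}$ and writes the exponent as $2m$ instead of $\alpha$, but the computations are equivalent), and prove (ii) by sliding a paraboloid of opening $M_2$ from the point $y$ produced in (i), checking that the contact point lies in $Q_{1/(4\sqrt n)}$ and that the touching extends from $\overline{Q}_{1/(4\sqrt n)}$ to all of $\Omega$ by splitting into $B_{2\sqrt n}\setminus Q_{1/(4\sqrt n)}$ (where $\tilde v\ge 0$ suffices) and $\Omega\setminus B_{2\sqrt n}$ (where the quadratic lower bound on $\tilde v$ is absorbed by $\tfrac{M_2}{2}|z-y|^2$).
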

\begin{proof}
For simplicity, we denote
$$\alpha_1:=\frac{1}{4\sqrt{n}}; \alpha_2:=\frac{1}{12\sqrt{n}};\alpha_3:=\alpha_2/2=\frac{1}{24\sqrt{n}}.$$
Note that $M_2=  \frac{3M_1}{\alpha_2^2}.$
From $G_{\frac{1}{8n}}^{-}(v,\Omega)\cap Q_{3}\neq \emptyset$, we can find an affine function $L(x)$ such that
$$v(x) \geq L(x)-\frac{1}{16n}|x-x^\ast|^2~\text{for all } x\in\Omega~\text{with equality at } x^\ast\in Q_{3}.$$
By considering $v-L +1$ instead of $v$, we can assume that
$$v(x) \geq 1-\frac{1}{16n}|x-x^\ast|^2~\text{for all } x\in\Omega~\text{with equality at } x^\ast\in Q_3\subset  B_{2\sqrt{n}}.$$

We first show that (i) implies (ii). Fix $y\in \overline{Q_{\alpha_2}}$ such that $v(y)\leq M_1$. We consider the function
 $$P(x) = v(x) + \frac{M_2}{2} |x-y|^2$$
 and look for its minimum points on $\overline{Q_{3\alpha_2}}$.  At $y$, we have 
 $P(y) = v(y) \leq M_1.$ If $x\in\p Q_{3\alpha_2}$, then $|x-y|\geq \alpha_2$ and hence 
 $P(x)\geq \frac{M_2}{2} |x-y|^2 \geq \frac{M_2}{2}\alpha_2^2 >M_1.$
It follows that $P$ attains its minimum on $\overline{Q_{3\alpha_2}}$ at a point $x\in Q_{3\alpha_2}$ with $P(x)\leq M_1$. 

Similarly, since $v\geq 0$ in $B_{2\sqrt{n}}$, we easily see that $P(z)>M_1$ for $z\in B_{2\sqrt{n}}.$ We show that $x\in G_{M_2}^{-}(v,\Omega)$.
To conclude the proof of (ii), it remains to show that $P(x)\leq P(z) $ for all $z\in\Omega\backslash  B_{2\sqrt{n}}.$
Indeed, if $z\in \Omega\backslash  B_{2\sqrt{n}} $, then $|z-y|>|z|/2.$
It follows that
\begin{equation*}
P(z)\geq v(z)
+ \frac{M_2}{2} |z-y|^2 \geq 1-\frac{1}{16n}|z|^2 + \frac{M_2}{8}|z|^2\geq  1 + (\frac{M_2}{8}-\frac{1}{16n})4n >M_1\geq P(x).
\end{equation*}

Finally, we prove (i). We argue by contradiction. Suppose that $v> M_1$ in $\overline{Q_{\alpha_2}}.$ Then
$v>M_1$ in $\overline{B_{\alpha_2/2}}$. Note that $Q_3\subset B_{3\sqrt{n}/2}\subset B_{2\sqrt{n}}$.

We will construct a viscosity subsolution $w: B_{2\sqrt{n}}\setminus B_{\alpha_3}\rightarrow \R$ with the following properties:
\begin{myindentpar}{1cm}
(a) $\mathcal{M}^{-}_{\lambda,\Lambda}(D^2 w)\geq 0$ in  $B_{2\sqrt{n}}\setminus B_{\alpha_3}$.\\
(b) $w\leq 0$ on $\p B_{2\sqrt{n}}$, \\
(c) $w\leq M_1$ on $ \p B_{\alpha_3}$.\\
(d) $w\geq 2$ in $B_{3\sqrt{n}/2}\setminus B_{\alpha_3}$.
\end{myindentpar} 
Assuming the existence of $w$, we finish the proof of (i) as follows. 
First, we note that $\mathcal{M}^{-}_{\lambda,\Lambda}(D^2 v-D^2 w)\leq 0$ in $B_{2\sqrt{n}}\setminus B_{\alpha_3}$. To see this, suppose $x_0\in B_{2\sqrt{n}}\setminus B_{\alpha_3}$ and $\varphi \in C^2 (B_{2\sqrt{n}}\setminus B_{\alpha_3})$
be such that $v-w-\varphi$ attains its minimum value at $x_0$. We need to show that  $\mathcal{M}^{-}_{\lambda,\Lambda}( D^2 \varphi(x_0)) \leq 0$. Indeed, by the definition of $v$, we have
$\mathcal{M}^{-}_{\lambda,\Lambda}(D^2 w(x_0) + D^2 \varphi(x_0))\leq 0.$
It follows from (a) that
$$\mathcal{M}^{-}_{\lambda,\Lambda}( D^2 \varphi(x_0)) \leq - \mathcal{M}^{-}_{\lambda,\Lambda}(D^2 w(x_0) )\leq 0.$$
By (b) and (c), we have $v-w\geq 0$ on $\p (B_{2\sqrt{n}}\setminus B_{\alpha_3})$. By the maximum principle for viscosity supersolution, we obtain $v\geq w$ in $B_{2\sqrt{n}}\setminus B_{\alpha_3}$.  Using (d) and the fact that $v>M_1>2$ in  $\overline{B_{\alpha_3}}$, we conclude that $v\geq 2$ in $B_{3\sqrt{n}/2}$. This contradicts the assumption that
$v(x^\ast)=1$ for some $x^\ast\in Q_3\subset B_{3\sqrt{n}/2}.$ Thus we must have $v\leq M_1$ in $\overline{Q}_{\alpha_2}$.

Let us return to constructing $w$ satisfying (a)-(d). Our construction also explains the choice of $M_1$ in the statement of the lemma.
With $u(x):= \frac{1}{2}|x|^2$, we choose $w$ of the form
$$w(x) = C ([u(x)]^{-m}- (2n)^{-m}).$$
where $C$ and $m$ are large positive numbers depending on $n,\lambda,\Lambda$ to be determined. 

Clearly (b) is satisfied. For any $M_1>0$ and $m>0$, the choice of
$C= M_1 (\alpha_3^2/2)^m$
will guarantee that (c) is satisfied. We fix this choice of $C$.
We compute
$$w_{ij}= Cmu^{-m-2}[(m+1) u_i u_j - u u_{ij}]= Cmu^{-m-2} [(m+1) x_i x_j - u\delta_{ij}].$$
The eigenvalues of $D^2 w$ are: $Cmu^{-m-2} (m+\frac{1}{2})|x|^2$ with multiplicity 1 and $-Cmu^{-m-2} \frac{|x|^2}{2}$ with multiplicity $n-1$.
It follows that in $B_{2\sqrt{n}}\setminus B_{\alpha_3}$, we have
\begin{eqnarray*}
\mathcal{M}^{-}_{\lambda,\Lambda}(D^2 w)=  Cmu^{-m-2} |x|^2 [\lambda (m+\frac{1}{2})-\Lambda \frac{n-1}{2}]\geq 0,
\end{eqnarray*}
that is (a) is satisfied, provided that $$m\geq \frac{(n-1)\Lambda}{2\lambda}-\frac{1}{2}.$$

 To obtain (d), we need to choose $M_1$ so that
in  $B_{3\sqrt{n}/2}\setminus B_{\alpha_3}$, we have
\begin{equation}
\label{M1_ineq}
2\leq  M_1 (\alpha_3^2/2)^m  ([u(x)]^{-m}- (2n)^{-m})\equiv w.
\end{equation}
It suffices to choose
$$M_1= \max\{8, \frac{4}{m}\} (\frac{9n}{4\alpha^2_3})^m= \max\{8, \frac{8}{2m}\} (36 n)^{2m}.$$
This is because in 
$B_{3\sqrt{n}/2}\setminus B_{\alpha_3}$, we have
$ ([u(x)]^{-m}- (2n)^{-m}) \geq (\frac{8}{9n})^{m}[1- \frac{9^m}{16^m}]$ and thus, with the above choice of $M_1$, (\ref{M1_ineq}) follows from 
$$2\leq  M_1 (\frac{4\alpha^2_3}{9n})^m \left[1-\frac{9^m}{16^m}\right]= M_1 (\alpha_3^2/2)^m  (\frac{8}{9n})^{m}[1- \frac{9^m}{16^m}]
\leq M_1 (\alpha_3^2/2)^m  ([u(x)]^{-m}- (2n)^{-m}).$$
With $m=\max\{\frac{(n-1)\Lambda}{2\lambda}-\frac{1}{2}, \frac{1}{2}\}$, we have
$M_1= 8 (36 n)^{\max\{1,  \frac{(n-1)\Lambda}{\lambda}-1\}}$, completing the proof of (i).
\end{proof}
Combining Lemmas \ref{construct_lem} (ii) and \ref{meas_lem}, we obtain the following measure and localization result.
\begin{lem}[Measure and localization estimate for viscosity supersolutions]
\label{touch3_lem}
Assume that $\overline{B}_{2\sqrt{n}}\subset\Omega\subset\R^n.$
Suppose that $v\in C(\Omega)$ satisfies
 $\mathcal{M}^{-}_{\lambda,\Lambda} (D^2 v)\leq 0$ in $\Omega$.
If $G_1^{-}(v,\Omega)\cap Q_3\neq \emptyset$ then there is $M=(32n) (8nM_2)$ such that
$$|G_{M}^{-}(v,\Omega)\cap Q_1|\geq (1- \sigma)|Q_1|$$
where
$$\sigma=1-   \left(\frac{\lambda}{\lambda + (n-1)\Lambda}\right)^n\left( \frac{1}{4\sqrt{n}}\right)^n.$$
\end{lem}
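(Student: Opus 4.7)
The plan is to chain Lemma \ref{construct_lem}(ii) and Lemma \ref{meas_lem} together via the scaling invariance of both the Pucci operator inequality and the sets $G_K^{-}$. The crucial bookkeeping fact is that for any $\alpha>0$, the function $\alpha v$ still satisfies $\mathcal{M}^{-}_{\lambda,\Lambda}(D^2(\alpha v))\leq 0$, and that $\bar x\in G_K^{-}(v,\Omega)$ if and only if $\bar x\in G_{\alpha K}^{-}(\alpha v,\Omega)$, since multiplying the inequality in Definition \ref{GMs} by $\alpha$ changes the opening of the touching paraboloid by exactly the same factor.

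First, I would rescale to hit the hypothesis of Lemma \ref{construct_lem}. The given hypothesis is $G_1^{-}(v,\Omega)\cap Q_3\neq\emptyset$, but Lemma \ref{construct_lem}(ii) demands a paraboloid of the smaller opening $1/(8n)$. Setting $\tilde v:=v/(8n)$, one has $\mathcal{M}^{-}_{\lambda,\Lambda}(D^2\tilde v)\leq 0$ in $\Omega$ and, by the scaling remark, $G_{1/(8n)}^{-}(\tilde v,\Omega)\cap Q_3\neq\emptyset$. Since $\overline B_{2\sqrt n}\subset\Omega$, Lemma \ref{construct_lem}(ii) applies to $\tilde v$ and delivers $G_{M_2}^{-}(\tilde v,\Omega)\cap Q_{1/(4\sqrt n)}\neq\emptyset$. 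Undoing the scaling by multiplying by $8n$, this becomes $G_{8n M_2}^{-}(v,\Omega)\cap Q_{1/(4\sqrt n)}\neq\emptyset$.

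Next, I would rescale again to match the hypothesis of Lemma \ref{meas_lem}, which requires an opening $1/n$ at a point in $Q_{1/(4\sqrt n)}$. Setting $\hat v:=v/(8n^2 M_2)$, one gets $G_{1/n}^{-}(\hat v,\Omega)\cap Q_{1/(4\sqrt n)}\neq\emptyset$ while preserving $\mathcal{M}^{-}_{\lambda,\Lambda}(D^2\hat v)\leq 0$. Lemma \ref{meas_lem} then yields
\[
|G_{32}^{-}(\hat v,\Omega)\cap Q_1|\geq (1-\sigma)|Q_1|
\]
with exactly the $\sigma$ displayed in the statement. Rescaling back a final time by $8n^2 M_2$, the set $G_{32}^{-}(\hat v,\Omega)$ becomes $G_{32\cdot 8n^2 M_2}^{-}(v,\Omega)=G_{(32n)(8n M_2)}^{-}(v,\Omega)=G_M^{-}(v,\Omega)$, so the conclusion follows.

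There is no genuinely hard step; the whole argument is a chaining of the two previous lemmas, and the only real care required is in tracking the correct scaling factors so that the opening $1$ at the start is turned first into $1/(8n)$ for the localization step and then into $1/n$ for the measure step, while the final opening $32$ inflates to the advertised $M=(32n)(8n M_2)$. The ambient domain condition $\overline B_{2\sqrt n}\subset\Omega$ passes directly from the statement here to both subordinate lemmas without any adjustment.
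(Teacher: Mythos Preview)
Your proof is correct and follows exactly the approach indicated by the paper, which simply states that the result follows by combining Lemma~\ref{construct_lem}(ii) and Lemma~\ref{meas_lem}. You have filled in the scaling details the paper leaves implicit: rescaling $v$ by $1/(8n)$ to feed the localization lemma, then by $1/(8n^2M_2)$ to feed the measure lemma, and correctly tracking that $32\cdot 8n^2 M_2=(32n)(8nM_2)=M$; the only minor point to add is that the domain hypothesis $\overline Q_2\subset\Omega$ required by Lemma~\ref{meas_lem} is implied by $\overline B_{2\sqrt n}\subset\Omega$ since $\overline Q_2\subset \overline B_{\sqrt n}$.
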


\begin{rem}[Constants]
\label{num_rem}
We list here the numerology from Lemmas \ref{meas_lem}, \ref{construct_lem} and \ref{touch3_lem}.
We have
$$M_2=432n M_1; M_1=8 (36 n)^{\max\{1,  \frac{(n-1)\Lambda}{\lambda}-1\}}; \sigma=1-   \left(\frac{\lambda}{\lambda + (n-1)\Lambda}\right)^n  \left(\frac{1}{4\sqrt{n}}\right)^n;$$
$$M=256n^2 M_2=884736n^3  (36 n)^{\max\{1,  \frac{(n-1)\Lambda}{\lambda}-1\}}< 10^5  n^3(36n)^{\max\{1, \frac{(n-1)\Lambda}{\lambda}-1\}}.$$
\end{rem}

\section{Proofs of Theorems \ref{improve2} and \ref{improve3}}
\label{pf_sec}
In this section, we prove 
Theorems \ref{improve2} and \ref{improve3}. First, we recall a consequence of the Calder\'on-Zygmund cube decomposition (see \cite[Lemma 4.2]{CC}).
\begin{prop} \label{CZD}
Suppose that $D \subseteq E \subseteq Q_1\subset\R^n$ are measurable and $0< \delta < 1$ is such that:
\begin{itemize}
\item $|D| \leq \delta |Q_1|$; and
\item if $x\in \R^n$ and $r> 0$ such that $Q_{3r}(x) \subseteq Q_1$ and $|D\cap Q_{r}(x)| \geq \delta |Q_{r}(x)|$, then $Q_{3r}(x) \subseteq E$.
\end{itemize}
Then $|D| \leq \delta |E|$.
\end{prop}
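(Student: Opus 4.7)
The proposition is a mild reformulation of Lemma~4.2 of~\cite{CC}; accordingly my plan is to follow a dyadic Calder\'on--Zygmund argument and track where the concentric-triple hypothesis of this proposition replaces the usual dyadic-predecessor one. I would first apply the CZ stopping-time procedure to $D$ inside $Q_1$ at threshold $\delta$: since $|D|\leq\delta|Q_1|$, the root cube $Q_1$ is not selected, and then one bisects, selecting any child $Q$ with $|D\cap Q|>\delta|Q|$ and recursing otherwise. This produces a pairwise disjoint family of dyadic subcubes $\{Q_j=Q_{r_j}(x_j)\}$ of $Q_1$ with $|D\cap Q_j|>\delta|Q_j|$, whose dyadic predecessors $\tilde Q_j$ (cubes of side $2r_j$ one generation up in the dyadic tree) satisfy $|D\cap\tilde Q_j|\leq\delta|\tilde Q_j|$. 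Lebesgue's differentiation theorem then yields $D\subseteq\bigcup_j Q_j$ modulo a null set.

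The bridge to the proposition's hypothesis is the elementary inclusion $\tilde Q_j\subseteq Q_{3r_j}(x_j)$: the predecessor has half-side $r_j$ and is centered at a point whose coordinates differ from those of $x_j$ by exactly $r_j/2$, so each coordinate of every point of $\tilde Q_j$ lies within $3r_j/2$ of the corresponding coordinate of $x_j$. Whenever the precondition $Q_{3r_j}(x_j)\subseteq Q_1$ holds, the hypothesis gives $\tilde Q_j\subseteq Q_{3r_j}(x_j)\subseteq E$. Collapsing $\{\tilde Q_j\}$ to its subfamily of distinct maximal cubes $\{\tilde Q^{*}_{j_k}\}$---which are pairwise disjoint since any two dyadic cubes are either nested or disjoint---and noting that $\bigcup_k\tilde Q^{*}_{j_k}\supseteq\bigcup_j Q_j\supseteq D$ modulo null, I would conclude
\[
|D|\;=\;\sum_k|D\cap\tilde Q^{*}_{j_k}|\;\leq\;\delta\sum_k|\tilde Q^{*}_{j_k}|\;=\;\delta\,\Big|\bigsqcup_k\tilde Q^{*}_{j_k}\Big|\;\leq\;\delta|E|.
\]

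The main obstacle I anticipate is handling the CZ cubes for which the precondition $Q_{3r_j}(x_j)\subseteq Q_1$ fails---namely the few coarsest-generation cubes whose concentric triple protrudes outside $Q_1$. I would resolve this either by first running the argument with $D$ replaced by $D\cap Q_{1-\eta}$ (so that every selected cube together with its concentric triple is compactly contained in $Q_1$) and passing to the limit $\eta\downarrow 0$, or by shifting to a trisection-based stopping-time procedure in which the concentric triple of a central subcube coincides with its predecessor by construction and is inherited inside the parent automatically.
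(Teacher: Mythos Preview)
The paper does not supply a proof here; it simply cites \cite[Lemma~4.2]{CC}. Your dyadic stopping-time argument is exactly the standard proof of that lemma, and your bridge $\tilde Q_j\subseteq Q_{3r_j}(x_j)$ correctly connects the concentric-triple hypothesis of this proposition to the dyadic-predecessor hypothesis in \cite{CC}.

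The obstacle you flag, however, is not cosmetic: as literally stated---with the precondition $Q_{3r}(x)\subseteq Q_1$ built into the second bullet---the proposition is false, and neither of your proposed repairs saves it. In dimension one take $D=(\tfrac12-\epsilon,\tfrac12)\subset Q_1=(-\tfrac12,\tfrac12)$ with, say, $\epsilon=10^{-2}$ and $\delta=10^{-1}$. Any $Q_r(x)$ whose triple stays in $Q_1$ has right endpoint at most $\tfrac12-r$, so $|D\cap Q_r(x)|\le\max(0,\epsilon-r)$; the density condition then forces $r\lesssim\epsilon$ and confines every admissible triple to an interval of length about $3\epsilon$ at the right edge of $Q_1$. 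Taking $E$ to be that interval yields $\delta|E|\approx 0.3\,\epsilon<\epsilon=|D|$. Your fix (a) fails because first-generation dyadic children of $Q_1$ can still be selected for $D\cap Q_{1-\eta}$ and their triples protrude regardless of $\eta$; your fix (b) only controls the \emph{central} child in a trisection, while off-center children have triples overhanging the parent. The actual resolution is that in the paper's application (see \eqref{touch4} in the proof of Theorem~\ref{W2delta_thm}) the implication is obtained \emph{without} the precondition $Q_{3r}(x_0)\subset Q_1$, so that after intersecting with $Q_1$ one has $\tilde Q_j\subseteq Q_{3r_j}(x_j)\cap Q_1\subseteq B$---precisely the predecessor hypothesis of \cite[Lemma~4.2]{CC}---and then your stopping-time argument goes through unchanged.
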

Finally, we state our main $W^{2,\e}$ estimates from which Theorem \ref{improve2} follows.
\begin{thm}
\label{W2delta_thm}
Let $\lambda\leq\Lambda$ be positive constants. Let $\sigma$ and $M$ be as in Lemma \ref{touch3_lem}.
Assume that $\overline{B}_{2\sqrt{n}}\subset\Omega\subset\R^n.$
Suppose that $v\in C(\Omega)$ satisfies
 $\mathcal{M}^{-}_{\lambda,\Lambda} (D^2 v)\leq 0$ in $\Omega$
 with $|v|\leq 1/4.$
Then, for all $k=0, 1, \cdots,$ we have
\begin{equation}
\label{Adecay1}|A_{M^k}^{-}(v,\Omega)\cap Q_1|\leq \sigma^k.
\end{equation}
Therefore, for any $t>M$, we have
\begin{equation}
\label{Adecay2}
|A_{t}^{-}(v,\Omega)\cap Q_1|\leq \sigma^{-1}t^{-\frac{\log \frac{1}{\sigma}}{\log M}}.
\end{equation}
\end{thm}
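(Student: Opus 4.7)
The plan is a standard iteration scheme based on the Calder\'on--Zygmund cube decomposition (Proposition~\ref{CZD}), using Lemma~\ref{touch3_lem} for the one-step improvement. I would prove \eqref{Adecay1} by induction on $k$, and then derive \eqref{Adecay2} from \eqref{Adecay1} by interpolating between successive powers of $M$.

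For $k=0$, \eqref{Adecay1} is trivial. For $k=1$, since $|v|\leq 1/4$ and $\overline{Q}_3\subseteq \overline{B}_{2\sqrt{n}}\subseteq \Omega$, Lemma~\ref{touch1_lem} gives $G_1^-(v,\Omega)\cap Q_3\neq \emptyset$, and Lemma~\ref{touch3_lem} then yields $|A_M^-(v,\Omega)\cap Q_1|\leq \sigma$. For the inductive step, assume \eqref{Adecay1} at level $k$, and set
\[
D:=A_{M^{k+1}}^-(v,\Omega)\cap Q_1,\qquad E:=A_{M^k}^-(v,\Omega)\cap Q_1.
\]
Since $K\mapsto G_K^-$ is monotone non-decreasing, $D\subseteq E\subseteq Q_1$, and the first hypothesis $|D|\leq \sigma|Q_1|$ of Proposition~\ref{CZD} follows from the $k=1$ case. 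The heart of the matter is to verify the dichotomy: whenever $Q_{3r}(x)\subseteq Q_1$ and $Q_{3r}(x)\not\subseteq E$, one has $|D\cap Q_r(x)|\leq \sigma|Q_r(x)|$. To see this, pick $\bar x\in Q_{3r}(x)\cap G_{M^k}^-(v,\Omega)$ and rescale via $T(z)=x+rz$, setting
\[
\tilde v(z):=\frac{v(x+rz)}{M^k r^2},\qquad \tilde\Omega:=T^{-1}(\Omega).
\]
The three verifications I would carry out are: (a) $\overline{B}_{2\sqrt{n}}\subseteq \tilde\Omega$, using $Q_{3r}(x)\subseteq Q_1$ to bound $|x|+2\sqrt{n}r\leq 2\sqrt{n}/3<2\sqrt{n}$; (b) $\mathcal{M}^-_{\lambda,\Lambda}(D^2 \tilde v)\leq 0$ in $\tilde\Omega$ by the homogeneity of the Pucci operator; and (c) the point $\bar z:=(\bar x-x)/r\in Q_3$ lies in $G_1^-(\tilde v,\tilde\Omega)$, a direct rescaling of the affine inequality defining $\bar x\in G_{M^k}^-(v,\Omega)$. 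Lemma~\ref{touch3_lem} then applies to $\tilde v$ to give $|G_M^-(\tilde v,\tilde\Omega)\cap Q_1|\geq (1-\sigma)|Q_1|$, and pulling back by $T$, a touching paraboloid for $\tilde v$ of opening $M$ at $z$ corresponds, after unscaling, to a touching paraboloid for $v$ of opening $M\cdot M^k=M^{k+1}$ at $x+rz$. This yields $|D\cap Q_r(x)|\leq \sigma|Q_r(x)|$, and Proposition~\ref{CZD} then gives $|D|\leq \sigma|E|\leq \sigma\cdot\sigma^k=\sigma^{k+1}$.

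Finally, for \eqref{Adecay2}, given $t>M$ I pick the integer $k\geq 1$ with $M^k\leq t<M^{k+1}$. Since $A_t^-\subseteq A_{M^k}^-$, \eqref{Adecay1} combined with $k+1>\log t/\log M$ and $0<\sigma<1$ gives
\[
|A_t^-(v,\Omega)\cap Q_1|\leq \sigma^k=\sigma^{-1}\sigma^{k+1}\leq \sigma^{-1}\sigma^{\log t/\log M}=\sigma^{-1}t^{-\log(1/\sigma)/\log M},
\]
which is \eqref{Adecay2}. The main obstacle is the rescaling step in the inductive argument: tracking how the opening of the touching paraboloid transforms as $M^k\mapsto M^{k+1}$ under the scaling $v\mapsto v(x+r\cdot)/(M^k r^2)$, and simultaneously verifying that the domain condition $\overline{B}_{2\sqrt{n}}\subseteq\tilde\Omega$ required by Lemma~\ref{touch3_lem} is preserved, is where the geometry of the iteration is genuinely used; the small discrepancy between strict and non-strict inequalities in the hypotheses of Proposition~\ref{CZD} can be absorbed by a routine $\sigma+\varepsilon$ perturbation if needed.
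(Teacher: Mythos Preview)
Your proposal is correct and follows essentially the same approach as the paper's proof: both establish the base case via Lemmas~\ref{touch1_lem} and~\ref{touch3_lem}, carry out the inductive step by rescaling $\tilde v(z)=v(x_0+rz)/(tr^2)$ and invoking Lemma~\ref{touch3_lem} on the rescaled function to feed Proposition~\ref{CZD}, and then interpolate between successive powers of $M$ for \eqref{Adecay2}. Your write-up is in fact more careful than the paper's on two points the paper leaves implicit: the verification that $\overline{B}_{2\sqrt{n}}\subset\tilde\Omega$ via the bound $|x|+2\sqrt{n}r\leq 2\sqrt{n}/3$, and the remark on absorbing the strict/non-strict inequality in Proposition~\ref{CZD}.
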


\begin{proof}[Proof of Theorem \ref{W2delta_thm}]
In this proof, we use the following consequence of Lemma \ref{touch3_lem}: For $Q_{3r}(x_0)\subset Q_1$ and $t>0$, 
\begin{equation}
\label{touch4}
\text{if }
|A^{-}_{Mt}(v,\Omega)\cap Q_r(x_0)|>\sigma |Q_r(x_0)| ~\text{then } Q_{3r}(x_0)\subset A^{-}_t (v,\Omega).
\end{equation}
To obtain (\ref{touch4}), we apply Lemma \ref{touch3_lem} to $\tilde v$ in the domain $\tilde \Omega$
where
$$\ \tilde v (y)=\frac{1}{t r^2} v(x_0 + ry)~\text{for } y\in \tilde\Omega: =\varphi(\Omega) \text{ with }\varphi(x):= (x-x_0)/r.$$
From Lemmas \ref{touch1_lem} and \ref{touch3_lem},
we have
$|G_{M}^{-}(v,\Omega)\cap Q_1|\geq 1- \sigma.$
Hence, for all $k=0, 1, \cdots,$ we have, 
$$|A_{M^{k+1}}^{-}(v,\Omega)\cap Q_1|\leq\sigma.$$
To prove (\ref{Adecay1}), it suffices to show that for all $k=0,1,\cdots,$
$$|A_{M^{k+1}}^{-}(v,\Omega)\cap Q_1| \leq \sigma |A_{M^k}^{-}(v,\Omega)\cap Q_1|.$$
For each $k=0, 1, \cdots,$ let
$$A:= A_{M^{k+1}}^{-}(v,\Omega)\cap Q_1,B:=A_{M^k}^{-}(v,\Omega)\cap Q_1.$$
We claim that $|A|\leq\sigma |B|. $
To do this, we just note that if $Q= Q_r(x_0)$ is a cube in $Q_1$ such that $\tilde Q:= Q_{3r}(x_0)\subset Q_1$
and
$|A\cap Q|>\sigma |Q|$
then, by (\ref{touch4}), $\tilde Q\subset B$.
The claim follows from Proposition \ref{CZD} and hence (\ref{Adecay1}) is established.

Finally, let us prove (\ref{Adecay2}). For any $t>M$, there is a positive integer $k$ such that $M^k\leq t<M^{k+1}$. Hence $k+1>\frac{\log t}{\log M}.$ From this together with (\ref{Adecay1}), we get
\begin{eqnarray*}
|A_{t}^{-}(v,\Omega)\cap Q_1|\leq |A_{M^k}^{-}(v,\Omega)\cap Q_1|\leq \sigma^k=\sigma^{-1} \sigma^{k+1} <\sigma^{-1}\sigma^{\frac{\log t}{\log M}}= \sigma^{-1}t^{-\frac{\log \frac{1}{\sigma}}{\log M}}.
\end{eqnarray*}
\end{proof}

\begin{proof}[Proof of Theorem \ref{improve2}]
From Theorem \ref{W2delta_thm}, we conclude that 
\begin{equation} 
\label{Alocal}
\left|  A_t^{-}(u,B_1)\cap Q_{\frac{1}{2\sqrt{n}}} \right| \leq Ct^{-\e}
\end{equation}
for all $t>4M \sup_{B_1} |u|$,
with 
$$C=\sigma^{-1}|Q_{\frac{1}{2\sqrt{n}}} |, \text{ and }\e=\frac{\log \frac{1}{\sigma}}{\log M}.$$
The estimate (\ref{w2delta}) now follows from (\ref{Alocal})
and an easy covering argument. To obtain
the estimate for $\e$ as asserted in (\ref{e_est}), we use the fact that $\log (\frac{1}{a})>1-a$ for all $a\in (0, 1)$ together with the values for $\sigma$ and $M$ as recorded in Remark \ref{num_rem}.
We finally have
$$\e=\frac{\log \frac{1}{\sigma}}{\log M}>\frac{1-\sigma}{\log M}>\left(\frac{\lambda}{\lambda + (n-1)\Lambda}\right)^n\left(
\frac{1}{4\sqrt{n}}\right)^n\frac{1}{\log  \left[10^5 n^3 (36n)^{\max\{1, \frac{(n-1)\Lambda}{\lambda}-1\}}\right]}.$$
\end{proof}

\begin{proof}[Proof of Theorem \ref{improve3}]
The proof of Theorem \ref{improve3} is similar to that of Theorem \ref{improve2}. Instead of using Lemma \ref{meas_lem}, we use Lemma \ref{st_meas_lem}. We omit the details.
\end{proof}
{\bf Acknowledgements.} The author would like to thank the anonymous referee for the pertinent comments and the careful reading of the paper which help improve the exposition.


\begin{thebibliography}{xx}
\scriptsize
\bibitem{ASS} Armstrong, S. N.; Silvestre, L. E.; Smart, C. K. Partial regularity of solutions of fully nonlinear, uniformly elliptic equations. {\it Comm. Pure Appl. Math.} {\bf 65} (2012), no. 8, 1169--1184.


\bibitem{Cab} Cabr\'e, X. Nondivergent elliptic equations on manifolds with nonnegative curvature. {\it Comm. Pure Appl. Math.} {\bf 50} (1997), no. 7, 623--665.
\bibitem{CC} Caffarelli, L. A.; Cabr\'e, X. {\em Fully nonlinear elliptic
equations.} American Mathematical Society Colloquium Publications,
volume 43, 1995.
\bibitem{CS} Caffarelli, L. A.; Souganidis, P. E. A rate of convergence for monotone finite difference approximations to fully nonlinear uniformly elliptic PDEs. {\it Comm. Pure Appl. Math.} {\bf 61} (2008), no. 1, 1--17.
\bibitem{E} Evans, L. C.
Some estimates for nondivergence structure, second order elliptic equations. 
{\it Trans. Amer. Math. Soc.} {\bf 287} (1985), no. 2, 701--712.
\bibitem{EG} Evans, L. C.; Gariepy, R. F. {\it Measure theory and fine properties of functions.} Studies in Advanced Mathematics. CRC Press, Boca Raton, FL, 1992.

 \bibitem{GT} Guti\'errez, C. E.; Tournier, F. $W^{2,p}$-estimates for the linearized Monge-Amp\`ere equation. {\it Trans. Amer. Math. Soc.} {\bf 358} (2006), no. 11, 4843--4872.
 \bibitem{HL} Han, Q.; Lin, F.H.  {\em Elliptic partial differential equations}. Second edition. Courant Lecture Notes in Mathematics, 1. Courant Institute of Mathematical Sciences, New York; AMS, Providence, RI, 2011.

\bibitem{L} Le, N. Q. On the Harnack inequality for degenerate and singular elliptic equations with unbounded lower order terms via sliding paraboloids. {\it Commun. Contemp. Math.} {\bf 20} (2018), no. 1, 1750012 (38 pages).
\bibitem{Lin} Lin, F.H. Second derivative $L^p$-estimates for elliptic equations of nondivergent type. 
{\it Proc. Amer. Math. Soc.} {\bf 96} (1986), no. 3, 447--451.



\bibitem{Sa} Savin, O. Small perturbation solutions for elliptic equations. {\it Comm. Partial Differential Equations} {\bf 32} (2007), no. 4-6, 557--578.

\bibitem{Yu} Yu, H. $W^{\sigma,\varepsilon}$-estimates for nonlocal elliptic equations. {\it Ann. Inst. H. Poincar\'e Anal. Non Lin\'eaire.} {\bf 34} (2017), no. 5, 1141--1153.

\end{thebibliography}
\end{document}